\documentclass[11pt]{article}
\usepackage{delarray}
\usepackage{amsmath, latexsym, amsfonts, amssymb, amsthm, amscd}
\usepackage{graphicx}
\usepackage{graphicx,,psfrag}
\usepackage{amssymb,amsmath,amsthm,enumerate}
\usepackage{color}
\addtolength{\topmargin}{-0.0cm} \setlength{\footskip}{0.5cm}
\newcommand{\E}{\mathbb{E}}
\newcommand{\C}{\mathbb{C}}
\newcommand{\R}{\mathbb{R}}
\newcommand{\tr}{\operatorname{tr}}
\newcommand{\Tr}{\operatorname{Tr}}
\newcommand{\vers}{\mathop{\longrightarrow}} 
\newcommand{\1}{1\!\!{\sf I}}

 \newtheorem{theorem}{Theorem}
 \newtheorem{proposition}{Proposition}
\newtheorem{remark}{Remark}
\newtheorem{lemma}{Lemma}
\newtheorem{corollary}{Corollary}

\begin{document}

\title{Non universality of fluctuations of outlier eigenvectors  for block diagonal deformations of Wigner matrices }
\author{M. Capitaine\thanks{\it \fontsize{8}{10}\selectfont Institut de Math\'ematiques de Toulouse; UMR5219; Universit\'e de Toulouse; CNRS;
 UPS, 118 rte de Narbonne F-31062 Toulouse, FRANCE. 
E-mail: mireille.capitaine@math.univ-toulouse.fr} \  and C. Donati-Martin\thanks{\it \fontsize{8}{10}\selectfont Laboratoire de Math\'ematiques de Versailles, UVSQ, CNRS, Universit\'e Paris-Saclay, 78035-Versailles Cedex, France, E-mail: catherine.donati-martin@uvsq.fr}}
\date{}
\maketitle
\begin{abstract}
In this paper, we investigate the fluctuations of a unit  eigenvector associated to an outlier in the spectrum of a spiked $N\times N$ complex  Deformed Wigner matrix $M_N$. $M_N$ is defined as follows: $M_N =
W_N/\sqrt{N} + A_N$ where $W_N$ is an $N \times N$ Hermitian  Wigner matrix whose entries have a  law $\mu$ satisfying
a Poincar\'e inequality and the matrix $A_N$ is a block diagonal matrix, with an eigenvalue $\theta$ of multiplicity one,  generating an outlier in the spectrum of $M_N$. We  prove that  the fluctuations of the norm of the projection of a unit  eigenvector   corresponding to the outlier of $M_N$ onto a unit  eigenvector  corresponding to $\theta$ are not universal.
Indeed, we take away a fit approximation of its limit from this norm   and prove the convergence to zero as $N$ goes to $\infty$
of  the
L\'evy–Prohorov distance between this  rescaled quantity and 
  the convolution of $\mu$  and a centered Gaussian distribution (whose variance may depend 
depend upon N  and may not
converge).\\

\noindent {\it Key words:} Random matrices; Outliers; eigenvectors; Fluctuations; Nonuniversality, Free probability.\\

\noindent Mathematics Subject Classification 2000: 15A18, 15A52, 60F05, 60B12, 46L54.
\end{abstract}
\section{Introduction}
To begin with, we introduce
some notations.
\begin{itemize}
\item $M_N(\C)$ is the set of $N\times N$ matrices  with complex entries,  $M_N^{sa}(\C)$ the subset of self-adjoint elements of $M_N(\C)$ and $I_N$ the identity matrix. 
\item $\Tr_N$ denotes the trace and $\tr_N = \frac{1}{N} \Tr_N$ the normalized trace on $M_N(\C)$.
\item $|| . ||$ denotes the operator norm on $M_N(\C)$.
\item For any $X \in M_N^{sa}(\C)$, $(\lambda_1(X), \ldots,\lambda_N(X))$ denote the eigenvalues of $X$ ranked in decreasing order and the empirical spectral measure of $X$ is defined by $$\mu_{X}:=\frac{1}{N} \sum_{i=1}^N \delta_{\lambda_i(X)}.$$
\item For a probability measure $\tau$ on $\R$, 
${\rm supp}(\tau)$ denotes the support of $\tau$ and  $g_\tau: z\in \mathbb{C}\setminus {\rm supp}(\tau) \mapsto \int\frac{1}{z-x} d\tau(x)$ is the Stieltjes transform of $\tau$.
\item $d_{LP}$ denotes the L\'evy-Prohorov distance, which is a metric for the topology of the convergence in distribution.
\end{itemize}
\subsection{Wigner matrices}
 Wigner matrices are  complex Hermitian random matrices whose entries are independent (up to the symmetry condition).
They were introduced by Wigner in the fifties, in connection with nuclear physics.
Here, we will consider Hermitian Wigner matrices of the following form :
$$X_N= \frac{1}{\sqrt{N}} W_N$$
where $W_N$ is an Hermitian matrix, $\{W_{ii}, \sqrt{2}\mathcal{R}W_{ij}, \sqrt{2}\mathcal{I}W_{ij}\}_{1\leq i< j}$ 
are independent identically distributed  random variables with law $\mu$, with mean zero and  variance $\sigma^2$.
  If the entries are independent  Gaussian variables, $ X_N=:X_N^G$ is a matrix from the Gaussian Unitary Ensemble (G.U.E.).\\
 There is currently a quite precise knowledge of the asymptotic  spectral properties
 (i.e. when the dimension of the matrix tends to infinity) of Wigner matrices. {This understanding covers both the so-called
 global regime (asymptotic behavior of the spectral measure) and the local regime
(asymptotic behavior of the extreme eigenvalues and  eigenvectors, spacings...).} 
Wigner  proved that a precise description of the limiting spectrum of these matrices
can be achieved.  
  \begin{theorem}\label{wig}\cite{Wigner55,Wigner58} 
 $$\mu_{X_N}\overset{w}{\longrightarrow} \mu_{sc}  \mbox{~~a.s. when~~} N \rightarrow + \infty  $$
 where 
\begin{equation}\label{sc}  \frac{d \mu_{sc}}{dx}(x)=  \frac{1}{2 \pi \sigma^2}
\sqrt{4\sigma^2 - x^2} \, 1_{[-2\sigma,2 \sigma]}(x)\end{equation}
is the so-called semi-circular distribution.
\end{theorem}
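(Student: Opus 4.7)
The plan is to use the moment method, following Wigner's original combinatorial approach. The first reduction is to show that since $\mu_{sc}$ has compact support and is determined by its moments, weak convergence $\mu_{W_N}\to\mu_{sc}$ will follow once I establish, for every fixed $k\in\mathbb{N}$, the almost sure convergence
\begin{equation*}
m_k(W_N) := \int x^k\,d\mu_{W_N}(x) = \frac{1}{N}\tr(W_N^k) \;\longrightarrow\; m_k := \int x^k\,d\mu_{sc}(x),
\end{equation*}
where $m_{2p}=\sigma^{2p}C_p$ (with $C_p$ the $p$-th Catalan number) and $m_{2p+1}=0$. The standard two-step strategy is (i) convergence in expectation: $\mathbb{E}[m_k(W_N)]\to m_k$, and (ii) concentration: $m_k(W_N)-\mathbb{E}[m_k(W_N)]\to 0$ almost surely.

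For step (i), I would expand
\begin{equation*}
\mathbb{E}[m_k(W_N)] = \frac{1}{N^{1+k/2}}\sum_{i_1,\dots,i_k=1}^N \mathbb{E}\bigl[H_{i_1 i_2}H_{i_2 i_3}\cdots H_{i_k i_1}\bigr]
\end{equation*}
and interpret each index tuple as a closed walk of length $k$ on the complete graph on $\{1,\dots,N\}$. Since the off-diagonal entries are centered and independent (up to Hermitian symmetry), only walks in which every edge is traversed an even number of times contribute. A standard counting argument shows the dominant configurations (those contributing $O(1)$ after the $N^{-1-k/2}$ normalization) are walks using exactly $k/2$ distinct edges each traversed exactly twice, whose underlying graph is a tree and whose edge-pairing is planar; these are counted by Catalan numbers. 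All other configurations (odd $k$, non-pair multiplicities, non-planar pairings, contributions from diagonal entries) are $O(1/N)$ under the moment assumptions, which are comfortably controlled because the Poincar\'e inequality implies all moments of the entries are finite. This yields $\mathbb{E}[m_{2p}(W_N)]\to \sigma^{2p}C_p$ and $\mathbb{E}[m_{2p+1}(W_N)]\to 0$.

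For step (ii), the Poincar\'e inequality is the clean tool: since $W_N\mapsto \tr(W_N^k)/N$ is a smooth function of the independent entries, the Poincar\'e inequality applied to the product measure of the entries yields $\mathrm{Var}(m_k(W_N))=O(N^{-2})$ (the gradient in the entries carries an extra $1/\sqrt{N}$ factor that squares to give the needed decay). Borel--Cantelli then upgrades this to almost sure convergence $m_k(W_N)-\mathbb{E}[m_k(W_N)]\to 0$. Taking a countable intersection of probability-one events over $k\in\mathbb{N}$ finishes the proof.

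The main obstacle is the combinatorial bookkeeping of step (i): one must rigorously classify closed walks by the isomorphism class of their edge-usage pattern, show that only double-edge tree walks survive in the limit, and recover the Catalan recursion. Everything else (moment bounds from Poincar\'e, concentration, passage from moment to weak convergence on the compactly-supported limit) is soft.
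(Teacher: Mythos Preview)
The paper does not give its own proof of this statement: Theorem~\ref{wig} is quoted in the introduction as classical background, with a citation to Wigner's original papers \cite{Wigner55,Wigner58}, and no argument is supplied. So there is nothing in the paper to compare your proposal against.

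That said, your outline is the standard moment-method proof and is correct in spirit. A couple of minor remarks: in step~(ii) you invoke the Poincar\'e inequality to get $\mathrm{Var}(m_k(W_N))=O(N^{-2})$, which is fine under the paper's standing assumption that $\mu$ satisfies a Poincar\'e inequality, but note that Wigner's theorem itself holds far more generally (only second moments are needed for weak convergence in probability, fourth moments for the a.s.\ version via the classical $\mathrm{Var}=O(N^{-2})$ combinatorial estimate), so tying the proof to Poincar\'e is convenient here but not necessary. Also, to get a.s.\ convergence from $\mathrm{Var}=O(N^{-2})$ you should write ``by Chebyshev and Borel--Cantelli'' rather than Borel--Cantelli alone. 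Otherwise the plan is sound.
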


A priori, the convergence of the spectral measure does not prevent an asymptotically negligeable fraction of eigenvalues from going away from the limiting support (called {\it outliers} in the following).
Actually, it turns out that Wigner matrices  do not  exhibit outliers. 

\begin{theorem}\label{baiyin} \cite{BaiYin88}   
 Assume that the entries of $W_N$ has finite fourth moment, then almost surely, $$\lambda_{1}(X_N) \rightarrow 2 \sigma
\mbox{~and~} \lambda_{N}(X_N) \rightarrow -2 \sigma
\mbox{~ when~} N \rightarrow + \infty.$$
\end{theorem}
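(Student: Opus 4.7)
The plan is to prove the statement by a classical two-sided argument: a lower bound that follows directly from Theorem~\ref{wig}, and an upper bound obtained by the moment method combined with truncation and Borel--Cantelli.

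For the lower bound $\liminf_N \lambda_{1}(W_N) \geq 2\sigma$ almost surely, I would argue as follows. Fix $\varepsilon > 0$. The interval $U_\varepsilon = (2\sigma - \varepsilon, 2\sigma)$ is open, nonempty, and contained in the support of $\mu_{sc}$, so $\mu_{sc}(U_\varepsilon) > 0$. By Theorem~\ref{wig} and the Portmanteau theorem, $\liminf_N \mu_{W_N}(U_\varepsilon) \geq \mu_{sc}(U_\varepsilon) > 0$ almost surely; in particular there exists, eventually, at least one eigenvalue of $W_N$ in $U_\varepsilon$, so $\lambda_{1}(W_N) > 2\sigma - \varepsilon$. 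Letting $\varepsilon \downarrow 0$ along a countable sequence gives the claim. The analogous lower bound for $-\lambda_{N}(W_N) = \lambda_{1}(-W_N)$ is obtained by the same argument applied to $-W_N$, which is itself a Wigner matrix with the same semicircular limit.

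The harder half is $\limsup_N \lambda_{1}(W_N) \leq 2\sigma$ almost surely. I would start from the trivial inequality
\[
\lambda_{1}(W_N)^{2k} \leq \Tr(W_N^{2k}) = \sum_{i=1}^N \lambda_i(W_N)^{2k},
\]
so that for any $\varepsilon > 0$ and any integer $k = k_N$,
\[
\mathbb{P}\bigl(\lambda_{1}(W_N) > 2\sigma + \varepsilon\bigr) \leq \frac{\mathbb{E}\, \Tr(W_N^{2k_N})}{(2\sigma + \varepsilon)^{2k_N}}.
\]
A fixed-$k$ expansion of $\mathbb{E}\,\Tr(W_N^{2k})$ as a sum over closed walks of length $2k$ on $\{1,\ldots,N\}$ shows, by the usual Wigner count, that the leading contribution comes from pair-matched walks encoded by rooted plane trees, giving $\mathbb{E}\,\Tr(W_N^{2k}) = N\sigma^{2k}C_k(1+o(1))$ where $C_k$ is the $k$th Catalan number. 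Since $C_k^{1/2k} \to 4$, a bounded $k$ can only recover convergence in probability; to get almost sure convergence via Borel--Cantelli one must let $k_N \to \infty$ slowly.

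The main obstacle is therefore to make the moment estimate uniform enough in $k$ and to accommodate the mere finite fourth moment assumption. I would first truncate the entries of $H_N$ at a level $\delta_N \sqrt{N}$ with $\delta_N \to 0$, re-centre, and re-scale; the discarded tails are controlled by the fourth moment (this is exactly where the hypothesis on the entries is used, and it is also exactly the point that blocks a weaker moment assumption). For the truncated matrix, a careful walk count, tracking the extra cost of closed walks that are not pair-matched trees, yields an estimate of the schematic form
\[
\mathbb{E}\,\Tr(W_N^{2k}) \leq C N (2\sigma)^{2k} \exp(c\, k^{3}/N)
\]
valid in a range $k \leq c' N^{1/2}$. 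Choosing $k_N$ of order $(\log N)^2$, for example, makes the exponential factor negligible and the right-hand side of the Markov bound summable in $N$; Borel--Cantelli then gives $\limsup_N \lambda_{1}(W_N) \leq 2\sigma + \varepsilon$ almost surely, and a countable choice of $\varepsilon \downarrow 0$ concludes. The companion bound for $\lambda_{N}(W_N)$ follows again by applying the same argument to $-W_N$.
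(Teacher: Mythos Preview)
The paper does not contain a proof of this statement: Theorem~\ref{baiyin} is quoted from \cite{BaiYin88} as background in the introduction, with no argument given. There is therefore nothing in the paper to compare your proposal against.

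That said, your sketch is a faithful outline of the classical Bai--Yin strategy: the lower bound from Wigner's theorem via Portmanteau is standard and correct, and the upper bound via growing-order trace moments after truncation is exactly the approach of \cite{BaiYin88} (and of the textbook treatments, e.g.\ \cite[Section~2.1.6]{AGZ}). The one point that would require genuine work if you were to write this out in full is the uniform moment estimate $\mathbb{E}\,\Tr(W_N^{2k_N})$ for $k_N\to\infty$: the combinatorial control of non-tree walks is delicate and is the real content of the Bai--Yin paper, not something that follows from the fixed-$k$ Wigner count. Your schematic bound and choice of $k_N$ are in the right spirit, but in a complete proof this step needs the careful path-counting of \cite{BaiYin88} or the F\"uredi--Koml\'os refinement.
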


 In \cite{TW}, Tracy and Widom
derived the limiting distribution (called   the Tracy-Widom law) of the largest eigenvalue of a G.U.E. matrix. 
\begin{theorem}
 Let $q: \mathbb{R}\rightarrow \mathbb{R} $ be the unique solution of the differential equation
$$q''(x)= x q(x) +2 q(x)^3$$ such that $q(x) \sim_{x\rightarrow +\infty} Ai(x)$ where $Ai$ is the Airy function, unique solution on $\mathbb{R}$ of the differential equation $f''(x)=xf(x)$ satisfying $f(x)\sim_{x\rightarrow +\infty} (4\pi \sqrt{x})^{1/2} \exp(-2/3x^{3/2})$.
Then $$\lim_{N \rightarrow +\infty} \mathbb{P} \left(\frac{N^{2/3}}{\sigma} \left( \frac{\lambda_1(X_N^G)}{\sqrt{N}}-2\sigma\right) \leq s \right) =F_2(s),$$
where $F_2(s)=\exp \left( -\int_{s}^{+\infty} (x-s) q^2(x) dx\right).$

\end{theorem}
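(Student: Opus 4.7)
The plan is to exploit the integrable structure of the GUE: write the distribution of $\lambda_1(W_N^G)$ as a Fredholm determinant through the Hermite kernel, pass to the edge scaling limit in which this kernel converges to the Airy kernel, and identify the resulting Airy Fredholm determinant with $F_2$ via the Painlev\'e~II equation.

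\textbf{Step 1 (determinantal representation).} The joint eigenvalue density of $W_N^G$ is proportional to $\prod_{i<j}(x_i-x_j)^2 \exp(-\alpha_N \sum_i x_i^2)$, a Vandermonde squared against a Gaussian weight. Orthogonalizing this weight by the appropriately rescaled Hermite functions $\phi_k^{(N)}$ realizes the eigenvalues as a determinantal point process with correlation kernel $K_N(x,y)=\sum_{k=0}^{N-1}\phi_k^{(N)}(x)\phi_k^{(N)}(y)$, and the standard inclusion--exclusion identity yields
\begin{equation*}
\mathbb{P}\bigl(\lambda_1(W_N^G)\leq t\bigr)=\det(I-K_N)_{L^2(t,+\infty)}.
\end{equation*}

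\textbf{Step 2 (edge scaling).} Writing $x=2\sigma+u/c_N$ with $c_N$ of order $N^{2/3}$ dictated by the statement, I would carry out a Plancherel--Rotach (steepest descent) asymptotic analysis of $\phi_{N-k}^{(N)}$ near the soft edge. This produces
\begin{equation*}
\frac{1}{c_N}K_N\!\left(2\sigma+\frac{u}{c_N},2\sigma+\frac{v}{c_N}\right)\longrightarrow K_{\mathrm{Airy}}(u,v)=\frac{Ai(u)Ai'(v)-Ai'(u)Ai(v)}{u-v}
\end{equation*}
uniformly on compact sets, together with the Airy-type tail decay in $u,v$ needed to turn pointwise convergence into trace-norm convergence of the operator on $L^2(s,+\infty)$. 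Continuity of the Fredholm determinant in the trace norm then yields
\begin{equation*}
\lim_N\mathbb{P}\bigl(c_N(\lambda_1(W_N^G)-2\sigma)\leq s\bigr)=\det(I-K_{\mathrm{Airy}})_{L^2(s,+\infty)}.
\end{equation*}

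\textbf{Step 3 (Painlev\'e II identification).} Using the ``integrable'' form of the Airy kernel, I would differentiate the resolvent of $I-K_{\mathrm{Airy}}$ on $(s,+\infty)$ with respect to $s$. Introducing $q(s):=((I-K_{\mathrm{Airy}})^{-1}Ai)(s)$ together with its natural companions and following the Tracy--Widom scheme, one derives a closed system of ODEs that reduces to Painlev\'e~II $q''(x)=xq(x)+2q(x)^3$ with the boundary condition $q(x)\sim Ai(x)$ at $+\infty$, and a further integration in $s$ gives
\begin{equation*}
\det(I-K_{\mathrm{Airy}})_{L^2(s,+\infty)}=\exp\!\left(-\int_s^{+\infty}(x-s)q(x)^2\,dx\right)=F_2(s).
\end{equation*}
The main obstacle is Step~2, namely the uniform Plancherel--Rotach asymptotics of the Hermite wave functions at the edge and the trace-class bounds allowing one to pass the rescaled kernel convergence inside the Fredholm determinant; by contrast, Steps~1 and 3 are essentially algebraic manipulations once the integrable framework and the edge limit are available.
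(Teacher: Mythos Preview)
Your outline is the standard Tracy--Widom argument and is, at the level of a sketch, correct. However, there is nothing to compare it against: the paper does \emph{not} prove this theorem. It is stated in the introduction as a background result, attributed to Tracy and Widom via the citation \cite{TW}, and no proof (or even proof sketch) is given. The theorem plays no role in the paper's own arguments; it is included only to situate the deformed-Wigner results within the broader context of edge fluctuations.

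For what it is worth, your three-step plan (Hermite determinantal structure $\to$ Plancherel--Rotach edge scaling to the Airy kernel $\to$ Tracy--Widom's Painlev\'e~II derivation of the Fredholm determinant) is exactly the route taken in the cited reference \cite{TW} (combined with the earlier determinantal machinery). Your identification of Step~2 as the analytic crux is also accurate.
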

 The first main step to prove the universality conjecture for fluctuations of the largest eigenvalue of Wigner matrices  has been achieved by  Soshnikov \cite{Sos99};
in \cite{LeeYin}, a necessary and sufficient  condition  on  off-diagonal entries of the Wigner matrix  is established 
 for the distribution  of the  largest eigenvalue    to weakly converge to  the Tracy-Widom distribution. We also refer to these  papers  for references on  investigations on edge universality.
 
In regards to eigenvectors, it is well known that the matrix whose columns are the   eigenvectors of
a  G.U.E. matrix  can be chosen to be distributed according to the Haar measure
on the unitary group. In the non-Gaussian case, the exact distribution of the eigenvectors cannot be computed. However,
 the eigenvectors of general Wigner matrices   have been the object
of a growing interest and in several papers, a delocalization and universality property
were shown for the eigenvectors of these  standard models (see among others \cite{BE,  E, E2, KYev, TV3} and references therein). Heuristically, delocalization for a random matrix means that its normalized eigenvectors look like the vectors uniformly distributed over the unit sphere.
Let us state for instance the following sample  result.
\begin{theorem} (Isotropic delocalization, Theorem 2.16 from \cite{BE}). 
Let $X_N$ be a $ N\times N$ Wigner matrix  satisfying some technical assumptions.  Let $v(1), \ldots, v(N)$ denote the normalized eigenvectors of $X_N$. Then, for any $C_1 > 0$
and $0<\epsilon< 1/2$, there exists $C_2 > 0$  such that
$$\sup_{
1\leq i\leq N} |\langle v(i) , u \rangle| \leq  \frac{N^\epsilon}{\sqrt{N}},$$
for any fixed unit vector $u \in  \mathbb{C}^N$, with probability at least $ 1- C_2 N^{-C_1}$.
\end{theorem}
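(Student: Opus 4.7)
The plan is to reduce the delocalization bound to an isotropic local semicircle law for the resolvent $G(z) := (W_N - z)^{-1}$. The spectral decomposition gives, for $z = E + i\eta$,
\[
\eta \, \mathrm{Im}\, \langle u, G(z) u\rangle = \sum_{i=1}^N \frac{\eta^2 \, |\langle v(i), u\rangle|^2}{(\lambda_i(W_N) - E)^2 + \eta^2},
\]
so specializing $E = \lambda_i(W_N)$ and retaining only the $i$-th term yields the pointwise inequality
\[
|\langle v(i), u\rangle|^2 \leq \eta \, \mathrm{Im}\, \langle u, G(\lambda_i(W_N) + i\eta) u\rangle.
\]
Consequently, an $O(1)$ bound on $|\langle u, G(z) u\rangle|$ that holds with probability $1 - C_2 N^{-C_1}$ at scale $\eta = N^{-1+2\epsilon}$ translates directly into $|\langle v(i), u\rangle|^2 \leq N^{-1+2\epsilon}$, which after renaming $\epsilon$ is the claimed bound.

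The core step is therefore the isotropic local semicircle law: for a fixed unit vector $u$ and any $\eta \geq N^{-1+\delta}$,
\[
\bigl| \langle u, G(z) u\rangle - m_{sc}(z) \bigr| \leq \frac{N^{\epsilon/2}}{\sqrt{N \eta}}
\]
with overwhelming probability. My proof would proceed by writing $u = \sum_j u_j e_j$ and decomposing
\[
\langle u, G(z) u\rangle - m_{sc}(z) = \sum_j |u_j|^2 \bigl(G_{jj} - m_{sc}(z)\bigr) + \sum_{j \neq k} \bar u_j u_k \, G_{jk}.
\]
The diagonal sum is controlled by the standard entrywise local law $|G_{jj} - m_{sc}(z)| \leq N^{\epsilon}/\sqrt{N\eta}$ combined with $\sum_j |u_j|^2 = 1$. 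For the off-diagonal sum, I would apply Schur's complement formula to express each $G_{jk}$ as a bilinear form in the independent $j$-th and $k$-th columns of $W_N$ multiplied by a resolvent of the corresponding minor; a Hanson-Wright type concentration inequality for such bilinear forms, available here because $\mu$ satisfies a Poincar\'e inequality, then yields the needed cancellation in $\sum_{j \neq k} \bar u_j u_k G_{jk}$ through a fluctuation-averaging argument.

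With the isotropic law established, the proof closes by invoking the rigidity of the eigenvalues $\lambda_i(W_N)$ (itself a corollary of the entrywise local law) so that $\lambda_i(W_N) + i N^{-1+2\epsilon}$ lies in the region of validity, substituting into the pointwise inequality, and using $|m_{sc}(z)| = O(1)$; a union bound over a polynomial net in $z$ together with the fixedness of $u$ keeps the total failure probability below $C_2 N^{-C_1}$. The hard part is clearly the off-diagonal estimate: a naive term-by-term bound on $|G_{jk}|$ summed over $j \neq k$ loses a factor of $N$, and one really needs to exploit cancellation in the bilinear form over $(j,k)$ uniformly in the coefficients $\bar u_j u_k$ with only $\ell^2$ control. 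This is the genuine content of the isotropic upgrade beyond the entrywise local semicircle law and is where essentially all of the technical effort of the proof is concentrated.
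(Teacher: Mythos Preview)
This theorem is not proved in the paper at all: it is quoted in the introduction as a sample result from the literature, with an explicit attribution ``Theorem 2.16 from \cite{BE}'', and the paper supplies no argument for it. There is therefore no ``paper's own proof'' to compare your proposal against.

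For what it is worth, your sketch is a faithful outline of the strategy actually used in \cite{BE}: the reduction of delocalization to an isotropic local law via the spectral-decomposition identity
\[
|\langle v(i), u\rangle|^2 \leq \eta \, \mathrm{Im}\, \langle u, G(\lambda_i(W_N) + i\eta) u\rangle,
\]
followed by the isotropic upgrade of the entrywise local semicircle law and a net/rigidity argument, is exactly the architecture of that paper. Your identification of the off-diagonal sum $\sum_{j\neq k}\bar u_j u_k G_{jk}$ as the crux is also correct. One caveat: in \cite{BE} the off-diagonal control is not obtained by a direct Hanson--Wright bound on each $G_{jk}$ (which, as you note, would lose a factor of $N$) but rather by a polynomialization/self-consistent-equation argument that propagates high-moment bounds on the full bilinear form $\langle u, G(z) w\rangle$; the Poincar\'e inequality assumed in the present paper is not what drives that step. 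So while your high-level plan is right, the phrase ``a Hanson--Wright type concentration inequality \ldots\ then yields the needed cancellation'' understates what is actually required.
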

\subsection{Deformed Wigner matrices}
Practical problems (in the theory of statistical learning, signal detection
etc.) naturally lead to wonder about the spectrum reaction
of a given random matrix after a deterministic perturbation. In those applications, the random matrix is the noise and the perturbed matrix is a noisy version of the information; the question  is to know whether the observation of the spectral properties 
 of the perturbed matrix can
give access to  significant parameters on the information. 
Theoretical results on these deformed random models may allow  to establish
statistical tests on these parameters. {A typical illustration is the so-called BBP phenomenon
(after Baik, Ben Arous, P\'ech\'e) which put forward outliers
(eigenvalues that move away from the rest of the spectrum) and their Gaussian fluctuations
for spiked covariance matrices in \cite{BBP} and for low rank deformations of G.U.E. in \cite{Peche}.}\\
In this paper, we consider additive perturbations of Wigner matrices.
The pionner works on additive deformations go back to Pastur \cite{Pa} for the behavior of the limiting spectral distribution and to F{\"u}redi and Koml{\'o}s \cite{FK} for the behavior of the largest eigenvalue. \\
We refer to \cite{CD} and the references therein for a survey on spectral properties of deformed random matrices.  \\
The model studied is as follows : 
\begin{equation}\label{defMN}M_N:=\frac{W_N}{\sqrt{N}}+A_N,\end{equation}
 where

~~
 \\
 {\bf (W)}  $W_N$ is a complex Wigner matrix, 
that is a $N\times N$ random Hermitian matrix such that $\{W_{ii}, \sqrt{2}\mathcal{R}W_{ij}, \sqrt{2}\mathcal{I}W_{ij}\}_{1\leq i< j}$ 
are independent identically distributed  random variables with law $\mu$. 
We assume that $\mu$ is a distribution with mean zero, variance $\sigma^2$,  and  satisfies a Poincar\'e inequality (see Appendix).
Note that this condition implies that $\mu$ has moments of any
order (see Corollary 3.2 and Proposition 1.10 in \cite{L}).

~~
\\
{\bf (A)} $A_N$ is a $N\times N$ deterministic Hermitian matrix, whose spectral measure $\mu_{A_N}$
converges to  a compactly supported probability measure $\nu$.
We assume that $A_N$ has a fixed number $q$ of eigenvalues, not depending on $N$, outside the support of $\nu$ called spikes, whereas the distance of the other eigenvalues to the support of $\nu$ goes to 0. 

\medskip
\noindent
The empirical spectral distribution $\mu_{M_N}$ converges a.s. towards the probability measure $\lambda := \mu_{sc} 
\boxplus \nu$ where $\mu_{sc} $ is the semicircular distribution with variance $\sigma^2$ and $\boxplus$ denotes the free convolution, see \cite{Pa} (in this paper, the limiting distribution is given via a functional equation for its Stieltjes transform), \cite[Theorem 5.4.5]{AGZ}. We refer to \cite{VDN, MS} for an introduction to free probability theory.\\
Concerning extremal eigenvalues, 
\cite{CDFF} proved  that the spikes of $A_N$ can generate outliers for the limiting spectrum of $M_N$, i.e. eigenvalues outside the support of the limiting distribution $\lambda$. More precisely, \cite{CDFF} proved the following (see \cite[Theorem 8.1]{CDFF} for a more general statement).
\begin{proposition} \label{propCVoutlier}\cite{CDFF}
Assume that a spike $\theta$   with a fixed  multiplicity $k_0$ in the spectrum of $A_N$ satisfies :
\begin{equation} \label{cond-outlier}
 \theta \in \Theta_{\sigma,\nu}:=\{ u \in \mathbb R \backslash {\rm supp}(\nu), \int_\mathbb R \frac{d\nu(x)}{ (u-x)^2} < \frac{1}{\sigma^2} \}.
 \end{equation}
 Denote by $n_{0}+1, \ldots , n_{0}+k_0$ the descending ranks of $\theta$ among the eigenvalues of $A_N$.
Then the $k_0$ eigenvalues $(\lambda_{n_{0}+i}(M_N), \, 1 \leq i \leq k_0)$ 
converge almost surely outside the support of $\lambda$
towards $\rho _{\theta} := \theta + \sigma^2 g_\nu(\theta)$.
Moreover, these eigenvalues asymptotically separate from the rest of the spectrum since 
(with the conventions that $\lambda_0(M_N)=+\infty$ and $\lambda_{N+1}(M_N)=-\infty$)
there exists  $0< \delta_0 $ such that 
\noindent almost surely for all large N, \begin{equation}\label{sepraj}\lambda_{n_{0}}(M_N) > \rho _{\theta} + \delta_0 \, \mbox{~and~} \, \lambda_{n_{0}+k_0 +1}(M_N) < \rho _{\theta} - \delta_0.\end{equation}
\end{proposition}
Note that \cite{CDFF} assumes that the distribution $\mu$ is symmetric but this assumption can be removed. Indeed, this assumption is used for establishing  Theorem 5.1   in \cite{CDFF} that is now  generalized by Theorem 1.1 in  \cite{BC}. Note also that \cite{CDFF} assumes moreover  that the support of  $\nu$ has a finite number of connected components in order to prove Theorem 6.1 in \cite{CDFF} but this assumption is removed in Theorem 2.3 in \cite{CP}. 
\begin{remark}
Note that  
$$ \{ u \in \mathbb R \backslash {\rm supp}(\nu), \int_\mathbb R \frac{d\nu(x)}{ (u-x)^2} < \frac{1}{\sigma^2} \}= \{ u \in \mathbb R \backslash {\rm supp}(\nu), H'(u)>0\}$$ and for any $\theta$ in this set, 
$\rho_\theta =  \theta + \sigma^2 g_\nu(\theta)=H(\theta)$,  where
$H$ is defined by \eqref{H} (see Section \ref{freeconv} below).
\end{remark}
It turns out that we can also describe the angle between the eigenvector associated to the outlier of $M_N$ and the corresponding eigenvector associated to the spike $\theta$.
Capitaine \cite{C} (see also \cite{CD}) proved
\begin{proposition} \label{propCVoutlier2}\cite{C} We keep the notation and hypothesis of Proposition \ref{propCVoutlier}.
Let $\xi$ be a unit eigenvector associated to one of  the eigenvalues $(\lambda_{n_{0}+i}(M_N), \, 1 \leq i \leq k_0)$. Then, a.s. 
\begin{equation} \label{CVangle}
\Vert P_{Ker(A_N - \theta I)} (\xi) \Vert^2 \longrightarrow_{N\rightarrow +\infty} \tau(\theta) := 1 - \sigma^2 \int \frac{1}{(\theta -x)^2} d\nu(x),
\end{equation}
where $P_E$ denotes the orthogonal projection onto any subspace $E$.
\end{proposition}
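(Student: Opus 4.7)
The plan is to express $\Vert P_{Ker(A_N-\theta I)}(\xi)\Vert^2$ as a contour integral of a single resolvent entry and then identify its limit via the subordination function of $\lambda = \mu_{sc,\sigma^2}\boxplus\nu$. Since $\theta$ has multiplicity one, I fix a unit eigenvector $u_\theta$ of $A_N$ for $\theta$, so that $P_{Ker(A_N-\theta I)} = u_\theta u_\theta^*$ and $\Vert P_{Ker(A_N-\theta I)}(\xi)\Vert^2 = |\langle u_\theta,\xi\rangle|^2$. Proposition~\ref{propCVoutlier}, together with the a.s.\ pinching of the non-outlier eigenvalues of $M_N$ to $supp(\lambda)$, ensures the existence of $\delta>0$ and a random index $N_0$ such that, almost surely for $N\geq N_0$, the only eigenvalue of $M_N$ in the disk $D(\rho_\theta,\delta)$ is $\lambda_{i_0}(M_N)$ and $D(\rho_\theta,2\delta)\cap supp(\lambda)=\emptyset$. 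Setting $\Gamma=\partial D(\rho_\theta,\delta)$ (counterclockwise) and $G_N(z):=(zI-M_N)^{-1}$, Cauchy's formula yields
$$|\langle u_\theta,\xi\rangle|^2 \;=\; \frac{1}{2\pi i}\oint_\Gamma u_\theta^* G_N(z)\, u_\theta\, dz.$$

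The second, and main, step is to prove that uniformly in $z\in\Gamma$,
$$u_\theta^* G_N(z)\, u_\theta \;\longrightarrow\; \frac{1}{\omega(z)-\theta}\quad\text{almost surely,}$$
where $\omega(z):=z-\sigma^2 g_\lambda(z)$ is the subordination function characterized by $g_\lambda(z)=g_\nu(\omega(z))$. Working in a basis in which $u_\theta$ is the first coordinate vector, $u_\theta^* G_N(z) u_\theta = [G_N(z)]_{11}$, and Schur's complement gives
$$[G_N(z)]_{11} \;=\; \frac{1}{z-\theta - W_{11}/\sqrt N - v^* G_N^{(1)}(z)\, v},$$
where $v$ collects the entries $W_{1j}/\sqrt N$, $j\geq 2$, and $G_N^{(1)}$ is the resolvent of the principal minor $M_N^{(1)}$ of $M_N$. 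The Poincar\'e inequality on $\mu$ provides Gaussian-type concentration of the quadratic form $v^* G_N^{(1)}(z) v$ around $\frac{\sigma^2}{N}\Tr G_N^{(1)}(z)$, while Pastur's theorem applied to the deformed Wigner matrix $M_N^{(1)}$ yields $\frac{1}{N}\Tr G_N^{(1)}(z)\to g_\lambda(z)$ almost surely. Combined with $W_{11}/\sqrt N\to 0$ this gives the pointwise limit; uniformity in $z\in\Gamma$ follows from the uniform boundedness of $[G_N(z)]_{11}$ on $\Gamma$ together with Vitali's theorem.

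The last step is a residue computation. Passing to the limit in the contour integral,
$$|\langle u_\theta,\xi\rangle|^2 \;\longrightarrow\; \frac{1}{2\pi i}\oint_\Gamma \frac{dz}{\omega(z)-\theta}.$$
Differentiating $z=\omega(z)+\sigma^2 g_\nu(\omega(z))$ at $z=\rho_\theta$, where $\omega(\rho_\theta)=\theta$ by the very definition of $\rho_\theta$, gives $1 = \omega'(\rho_\theta)\bigl(1+\sigma^2 g_\nu'(\theta)\bigr)$. Since $g_\nu'(\theta) = -\int(\theta-x)^{-2}\,d\nu(x)$ and condition~\eqref{cond-outlier} guarantees $\tau(\theta)=1-\sigma^2\int(\theta-x)^{-2}d\nu(x)>0$, one gets $\omega'(\rho_\theta)=1/\tau(\theta)\neq 0$. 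Hence $\rho_\theta$ is the unique simple zero of $\omega(z)-\theta$ in $D(\rho_\theta,\delta)$ for small $\delta$, and the residue theorem produces exactly $\tau(\theta)$ as the limit.

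I expect the technical heart of the argument to be Step~2: establishing the almost-sure isotropic resolvent limit with uniformity over $\Gamma$. This relies crucially on the concentration of quadratic forms furnished by the Poincar\'e inequality, and on a deterministic bound for $\Vert G_N^{(1)}(z)\Vert$ on $\Gamma$. The latter rests on the a.s.\ absence of eigenvalues of $M_N^{(1)}$ near $\rho_\theta$, itself a consequence of Proposition~\ref{propCVoutlier} applied to $M_N^{(1)}$ together with the observation that deleting the spike $\theta$ removes precisely the outlier $\rho_\theta$ and creates no new one in its vicinity.
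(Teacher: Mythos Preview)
The paper does not prove Proposition~\ref{propCVoutlier2}; it is quoted from \cite{C} (see the sentence preceding the statement) and used as an input for the fluctuation analysis. So there is no ``paper's own proof'' to compare against directly.

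That said, your sketch is correct and is essentially the argument in \cite{C}: isolate the outlier by a small contour $\Gamma$, write the rank-one spectral projector via Cauchy's formula as $\frac{1}{2\pi i}\oint_\Gamma u_\theta^* G_N(z)u_\theta\,dz$, identify the a.s.\ limit of $u_\theta^* G_N(z)u_\theta$ through Schur's complement and the subordination relation $g_\lambda(z)=g_\nu(\omega(z))$ with $\omega(z)=z-\sigma^2 g_\lambda(z)$, and finish by computing the residue $1/\omega'(\rho_\theta)=\tau(\theta)$. The only point that deserves a bit more care is the passage to uniform convergence on $\Gamma$: you need the a.s.\ spectral confinement of both $M_N$ and $M_N^{(1)}$ away from $\Gamma$ (this is Theorem~8.1 in \cite{CDFF}) to get a deterministic-in-$z$ bound on $\|G_N^{(1)}(z)\|$ for large $N$, after which Vitali/Montel applies as you indicate.

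It is worth noting that the machinery the paper develops for the fluctuations would yield an alternative proof: instead of Cauchy's contour integral, represent $|\langle e_1,v_{i_0}\rangle|^2=h(M_N)_{11}$ via the Helffer--Sj\"ostrand formula \eqref{HS2}, use the limit \eqref{limiteG11} (which is exactly your Schur-complement limit $G_{11}(z)\to 1/(\omega(z)-\theta)$), and compute the Helffer--Sj\"ostrand integral by Proposition~\ref{prop-calculHS}, which again reduces to the same residue. The Helffer--Sj\"ostrand route trades the need for uniform control on a contour touching the real axis for an integral over $\mathbb C$ where the singularity at $\Im z=0$ is killed by the factor $|y|^k$ in \eqref{real-axis}; your Cauchy-contour route is more elementary once the spectral confinement is in hand. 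Both lead to the same endpoint computation.
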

Note that fluctuations of outliers for deformed non-Gaussian Wigner matrices  have been more extensively  studied  in the case of perturbations $A_N$ of fixed rank $r$.
We emphasize that the limiting distribution in the CLT for outliers depends on the localisation/delocalisation of the eigenvector of the spike. 
Roughly speaking, in the delocalized case, the limiting distribution of the fluctuations of the correponding outliers is Gaussian. In the  localized case, the limiting distribution depends on the distribution $\mu$ of the entries and thus, this uncovers a non universality phenomenon.  We refer to \cite{CDF1} for these results. \\
We first recall the fluctuations of the largest eigenvalue $\lambda_1(M_N)$ when the matrix $A_N$ is a diagonal matrix of rank 1 in the localized case. 
\begin{proposition}  \cite{CDF} \label{TCL-rangfini} Assume that $A_N = diag (\theta, 0, \ldots, 0)$ with $\theta > \sigma$.
The fluctuations of $\lambda_1(M_N)$ around $\rho_\theta =  \theta + \frac{\sigma^2}{\theta}$ are given by 
$$ c_\theta \sqrt{N} (\lambda_1(M_N) - \rho_\theta) \vers^{(law)}_{N \rightarrow \infty} \mu \star N(0, v_\theta^2)$$
where $c_\theta = (1-\frac{\sigma^2}{\theta^2})^{-1}$, $v_\theta^2 =  \frac{1}{2} \frac{m_4 - 3\sigma^4}{\theta^2} + \frac{\sigma^4}{\theta^2 - \sigma^2} $ and $m_4$ denotes the fourth moment of $\mu$.
\end{proposition}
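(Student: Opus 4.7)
The strategy is a Schur complement / secular equation analysis, exploiting the rank-one structure of $A_N$. Write $M_N$ in block form
\begin{equation*}
M_N = \begin{pmatrix} \theta + W_{11}/\sqrt{N} & Y^* \\ Y & W_{N-1}/\sqrt{N} \end{pmatrix}, \qquad Y := \tfrac{1}{\sqrt{N}}(W_{12},\dots,W_{1N})^T,
\end{equation*}
where $W_{N-1}$ is the lower-right $(N-1)\times(N-1)$ Wigner block, independent of the first row. Since $\Vert W_{N-1}\Vert/\sqrt{N}\to 2\sigma<\rho_\theta$ a.s.\ by Theorem \ref{baiyin}, with probability tending to one $\lambda_1(M_N)$ lies outside the spectrum of $W_{N-1}/\sqrt{N}$ and thus satisfies the scalar secular equation $h_N(\lambda_1(M_N))=0$, where
\begin{equation*}
h_N(\lambda):=\lambda-\theta-\frac{W_{11}}{\sqrt{N}}-Y^*G(\lambda)Y,\qquad G(\lambda):=\bigl(\lambda I-W_{N-1}/\sqrt{N}\bigr)^{-1}.
\end{equation*}

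Evaluating $h_N$ at $\rho_\theta$ and using the identity $\rho_\theta-\theta=\sigma^2 g_{sc}(\rho_\theta)=\sigma^2/\theta$ (where $g_{sc}$ denotes the Stieltjes transform of $\mu_{sc,\sigma^2}$), one decomposes
\begin{equation*}
-\sqrt{N}\,h_N(\rho_\theta)=W_{11}+\sqrt{N}\Bigl[Y^*G(\rho_\theta)Y-\tfrac{\sigma^2}{N}\Tr G(\rho_\theta)\Bigr]+\tfrac{\sigma^2}{\sqrt{N}}\Bigl[\Tr G(\rho_\theta)-Ng_{sc}(\rho_\theta)\Bigr].
\end{equation*}
The first summand is $\mu$-distributed and independent of everything else by construction; the third is $1/\sqrt{N}$ times a centered linear statistic of $W_{N-1}$, which is $O_P(1)$ by classical CLT results for Wigner matrices and is therefore negligible; the second, conditionally on $W_{N-1}$, is a centered quadratic form in the iid entries of $Y$ whose fluctuations are asymptotically Gaussian by a CLT for iid sums (applicable under the Poincar\'e assumption on $\mu$). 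Combining these three facts one obtains $-\sqrt{N}h_N(\rho_\theta)\xrightarrow{(d)}\mu\star N(0,v_\theta^2)$. A mean-value expansion $h_N(\rho_\theta)=-h_N'(\tilde\lambda)(\lambda_1(M_N)-\rho_\theta)$ with $\tilde\lambda$ between $\rho_\theta$ and $\lambda_1(M_N)$, combined with $\tilde\lambda\to\rho_\theta$ a.s.\ from Proposition \ref{propCVoutlier} and the concentration $h_N'(\tilde\lambda)=1+Y^*G(\tilde\lambda)^2Y\to 1+\sigma^2\int(\rho_\theta-x)^{-2}d\mu_{sc,\sigma^2}(x)=\theta^2/(\theta^2-\sigma^2)=c_\theta$, then yields the result via Slutsky's lemma.

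The main obstacle is the explicit identification of $v_\theta^2$. Decomposing
\begin{equation*}
Y^*G(\rho_\theta)Y-\tfrac{\sigma^2}{N}\Tr G(\rho_\theta)=\tfrac{1}{N}\sum_k(|W_{1k}|^2-\sigma^2)G_{kk}+\tfrac{1}{N}\sum_{k\neq l}\bar W_{1k}G_{kl}W_{1l},
\end{equation*}
and using $\mathbb{E}[W_{1k}^2]=0$ (symmetry of $\mu$) together with $\mathbb{E}[|W_{1k}|^4]=(m_4+\sigma^4)/2$, a direct moment computation gives the conditional variance
\begin{equation*}
N\cdot\mathrm{Var}\bigl(Y^*G(\rho_\theta)Y-\tfrac{\sigma^2}{N}\Tr G(\rho_\theta)\,\big|\,W_{N-1}\bigr)=\frac{\sigma^4}{N}\Tr G(\rho_\theta)^2+\frac{m_4-3\sigma^4}{2N}\sum_k G_{kk}(\rho_\theta)^2.
\end{equation*}
Passing to the limit, $N^{-1}\Tr G(\rho_\theta)^2\to\int(\rho_\theta-x)^{-2}d\mu_{sc,\sigma^2}(x)=1/(\theta^2-\sigma^2)$, and $N^{-1}\sum_k G_{kk}(\rho_\theta)^2\to g_{sc}(\rho_\theta)^2=1/\theta^2$, the latter requiring entrywise concentration of the resolvent (a delocalization-type estimate obtained from the Poincar\'e inequality). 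These two limits recover exactly $v_\theta^2=\sigma^4/(\theta^2-\sigma^2)+(m_4-3\sigma^4)/(2\theta^2)$. The technical work lies in upgrading the conditional moment control to joint convergence in distribution (e.g.\ via Lindeberg or a conditional characteristic function argument) and in justifying the required resolvent concentration.
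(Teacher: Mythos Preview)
The paper does not give its own proof of this proposition: it is quoted from \cite{CDF}, and the only hint the paper offers is the remark (made for the more general Proposition~\ref{TCL-vp}) that ``the proof relies on a CLT for quadratic forms''. Your Schur-complement/secular-equation strategy, followed by the three-term decomposition $W_{11}+$\,(centered quadratic form)\,$+$\,(trace fluctuation), the CLT for the quadratic form, the negligibility of the trace term, and the linearization via a mean-value expansion with $h_N'(\tilde\lambda)\to c_\theta$, is precisely the method of \cite{CDF}; the variance identification via $N^{-1}\Tr G(\rho_\theta)^2\to 1/(\theta^2-\sigma^2)$ and $N^{-1}\sum_kG_{kk}(\rho_\theta)^2\to g_{sc}(\rho_\theta)^2=1/\theta^2$ is also the correct computation. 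So your proposal matches the (cited) argument both in approach and in detail.
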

Capitaine and P\'ech\'e \cite{CP} proved a fluctuation result for any outlier of a full rank deformation of a   G.U.E. matrix. Their result yields the following
\begin{proposition}\label{CaP}\cite{CP}
Assume that $W_N$ is a G.U.E. matrix (that is $\mu=\mathcal{N}(0,\sigma^2)$) and that $A_N$ is a diagonal matrix  with a spike $\lambda_{i_0}(A_N) = \theta \in \Theta_{\sigma, \nu}$ of multiplicity one and limiting spectral distribution $\nu$.
The fluctuations of $\lambda_{i_0}(M_N)$ around $$\rho^{(N)}_\theta = \theta + \sigma^2 \frac{1}{N-1}\sum_{\lambda_j(A_N)\neq \theta} \frac{1}{\theta-\lambda_j(A_N)}$$ 
are given by\footnote{They consider fluctuations around this point depending on $N$ in order to not prescribe  speed of convergence of $\mu_{A_N}$ to $\nu$.}:
$$
 c_{\theta ,\nu}\sqrt{N} (\lambda_1(M_N) - \rho^{(N)}_\theta) \vers^{(law)}_{N \rightarrow \infty}  N(0, \sigma_{\theta, \nu}^2)
$$
where $c_{\theta,\nu} = \left( 1 - \sigma^2 \int \frac{1}{(\theta -x)^2}d\nu(x) \right)^{-1}$ and 
\begin{eqnarray*}
\sigma_{\theta, \nu}^2  &=& \sigma^2 \left( 1 - \sigma^2  \int \frac{1}{(\theta -x)^2}d\nu(x) \right)^{-1}.
\end{eqnarray*}
\end{proposition}
\subsection{Main results}\label{main}
In the following, we consider  block diagonal perturbations.  We focus on spikes of the perturbation with multiplicity one generating an outlier in the spectrum of the   deformed Wigner model. Therefore, we shall consider the following assumption on $A_N$ throughout the paper: \\
{\it {\bf (A')} $A_N$ satisfies {\bf (A)} and 
$$A_N = {\rm diag}(A_p, A_{N-p}),$$ with $A_{N-p}$ a $(N-p) \times (N-p)$ Hermitian matrix  for some fixed integer $p$, $A_p = PDP^*$ is a fixed matrix (independent of $N$) where $P$ a  $p \times p$ unitary matrix and  $D$ is a diagonal matrix. \\
  Assume that  $A_N$   has  a spike of multiplicity one, which is an eigenvalue of $A_p$, $\theta=\lambda_{i_0}(A_N)$  for some  $i_0$, satisfying \eqref{cond-outlier}. Without loss of generality, we can assume that  $\theta = D_{11}$. } \\
\noindent We set  $$W_N=\begin{pmatrix} W_{p} & Y^*\\ Y & W_{N-p} \end{pmatrix},$$ 
where $W_{p} \in M_{p}(\mathbb{C})$, $Y\in  M_{(N-p)\times p}(\mathbb{C})$ and $W_{N-p} \in M_{N-p}(\mathbb{C})$.
\\

The main results of this paper are the following  Theorems  \ref{casnondiag} and \ref{propcasnondiag2}  on  non universality of fluctuations of an eigenvector associated with  such an outlier. But, sticking to  the   approach of \cite{CRMTA},  we first establish  
Theorem \ref{vpcasnondiag} below,
which  is   an extension in the non-Gaussian case of Proposition \ref{TCL-rangfini} and Proposition \ref{CaP}, in the block diagonal case.\\
 Let $M_N$ be defined by \eqref{defMN}   with assumptions {\bf(W)} and {\bf (A')}.
By Proposition \ref{propCVoutlier}, \begin{equation} \label{convlambda}\lambda_{i_0}(M_N)\rightarrow_{N\rightarrow +\infty} \rho_\theta \text{~a.s.}. \end{equation}
Define \begin{equation} \label{rhon} \rho_N= \theta+\sigma^2 g_{\mu_{A_{N-p}}}(\theta).\end{equation}
Note that 
\begin{equation} \label{convrho} \rho_N \rightarrow_{N\rightarrow +\infty} \rho_\theta. \end{equation}
\begin{theorem}\label{vpcasnondiag}  Let $M_N$ be defined by \eqref{defMN} with assumptions {\bf(W)} and {\bf (A')}.
Define 
\begin{equation}\label{Cp}{\bf C}_p=\left\{\begin{array}{ll} ^t com(\theta I_p -A_p), \mbox{~if~} \theta I_p -A_p \neq 0\\I_p \mbox{~else} \end{array}\right.,  \end{equation} 
where $com(B)$ denotes the comatrix of a matrix $B$, and
$$c^{(1)}_\rho= \left( 1+\sigma^2 \int \frac{d\lambda(x)}{\left( \rho_\theta -x\right)^{2}}\right)\Tr_p({\bf C}_p).$$
Then
\begin{equation}\label{cvprincipal}d_{LP}(c^{(1)}_\rho \sqrt{N} (\lambda_{i_0}(M_N)-\rho_N), \Phi_N) \vers_{N\rightarrow \infty} 0\end{equation}
where
 $\Phi_N= \Tr_p({\bf C}_p W_p) +{\cal Z}_N$, 
 $W_p$ is the $p\times p$ upper left corner of the  Wigner matrix $W_N$,  ${\cal Z}_N$ is 
  a Gaussian random variable, independent from $W_p$,    with mean 0 and variance $v_\rho(N)$,
with $$v_\rho(N)=\Tr_p({\bf C}_p^2) \sigma^4 \int \frac{d\lambda(x)}{\left( \rho_\theta -x\right)^{2}}+ \frac{1}{2} (m_4-3 \sigma^4) \kappa_N
\sum_{i=1}^p \left( ({\bf C}_p)_{ii}\right)^2,$$
$\lambda = \mu_{sc} \boxplus \nu$, $\kappa_N=\frac{1}{N-p}\sum_{i=1}^{N-p} (((\theta I_{N-p}-A_{N-p})^{-1})_{ii})^2 $.\\
In particular, if $A_{N-p}$ is diagonal,
$c^{(1)}_\rho \sqrt{N} (\lambda_{i_0}(M_N)-\rho_N)$ converges in distribution to $\Tr_p({\bf C}_p W_p) +{\cal Z}$ where 
${\cal Z}$ is 
  a Gaussian random variable, independent from $W_p$,    with mean 0 and variance $v_\rho$,
with $$v_\rho=\Tr_p({\bf C}_p^2) \sigma^4 \int \frac{d\lambda(x)}{\left( \rho_\theta -x\right)^{2}}+ \frac{1}{2} (m_4-3 \sigma^4) \int \frac{d\nu(x)}{\left( \theta-x\right)^{2}}
\sum_{i=1}^p \left( ({\bf C}_p)_{ii}\right)^2.$$
\end{theorem}
The aim of Theorems \ref{casnondiag}  and \ref{propcasnondiag2} below  is to study the fluctuations associated to the a.s. convergence given in Proposition \ref{propCVoutlier2} above, for  block diagonal perturbations. We first state an approximation result in distribution, in the spirit of \cite{NY} for perturbations $A_N$ satisfying {\bf (A')}.
\begin{theorem}\label{casnondiag}  Let $M_N$ be defined by \eqref{defMN} with assumptions {\bf(W)} and {\bf (A')}.
Let $u_{i_0}$, resp. $v_{i_0}$ be a unit eigenvector associated to the spike $\theta$ of $A_N$, resp. the outlier $\lambda_{i_0}(M_N)$.
Define $\tau_N(\theta)$  an approximation of $\tau(\theta)$  by 
\begin{equation} \label{tauN}
 \tau_N(\theta) =  1 - \sigma^2 \int \frac{1}{(\theta -x)^2} d\mu_{A_{N-p}}(x). \end{equation} 
Then
$$d_{LP}( \sqrt{N} (|\langle u_{i_0}, v_{i_0}\rangle|^2 -  \tau_N(\theta)), \Psi_N) \vers_{N\rightarrow \infty} 0$$ 
where 
 the r.v.  $\Psi_N$ is given  by 
\begin{eqnarray*} 
   \Psi_N &=& (P^*( c_{\theta, \sigma} W_p + Z_{p,N}) P)_{11},
 \end{eqnarray*}
where 
\begin{equation} \label{ctheta}
 c_{\theta, \sigma}  = \sigma^2 g''_\nu(\theta),
 \end{equation}
$W_p$ is the $p\times p$ upper left corner of the  Wigner matrix $W_N$, 
  $Z_{p,N}$ is a centered Gaussian Hermitian matrix of size $p$,  independent from $W_p$,  with independent entries (modulo the symmetry conditions). The diagonal coefficients are i.i.d. with variance 
 \begin{equation}  \label{covarianceNb}
 \sigma^4 B_{\theta, \nu} + \frac{1}{2} (m_4-3 \sigma^4) A_{\theta, \nu,N} \end{equation}
where 
\begin{equation} \label{Btheta}
B_{\theta, \nu}= -\frac{1}{6}{g_\nu'''(\theta)} -\frac{\sigma^2}{2}({g_\nu''(\theta)})^2 \frac{1+2\sigma^2 g'_\nu(\theta)}{1+\sigma^2 g'_\nu(\theta)},
\end{equation}
\begin{eqnarray}\label{defAthetaN} \lefteqn{A_{\theta, \nu, N}=} \\
&&  \frac{1}{N-p} \sum_{i=1}^{N-p} \left( \sigma^2 g''_\nu(\theta) [(\theta I_{N-p} - A_{N-p})^{-1}]_{ii} - (1+ \sigma^2 g'_\nu(\theta)) [(\theta I_{N-p} - A_{N-p})^{-2}]_{ii} \right)^2 \nonumber 
\end{eqnarray}
$m_4$ is the fourth moment of $\mu$, $g_\nu$ is the Stieltjes transform of $\nu$.
The off diagonal elements $Z_{p,N}(i,j)$, $i<j$ are iid complex Gaussian with distribution $Z$ such that $\mathbb E(Z^2) =0$ and $\mathbb E(|Z|^2) = \sigma^4 B_{\theta, \nu}$.
\end{theorem}
In the case where the matrix $A_{N-p}$ is a diagonal matrix, the sequence $(A _{\theta,\nu, N})_N$ defined in \eqref{defAthetaN} converges as $N$ tends to infinity. This leads to the following fluctuations result : 
\begin{theorem}\label{propcasnondiag2} Let $M_N$ be defined by \eqref{defMN} with assumptions {\bf(W)},  {\bf (A')} and $A_{N-p}$ diagonal.   \\
Let $u_{i_0}$, resp. $v_{i_0}$ be a unit eigenvector associated to the spike $\theta$ of $A_N$, resp.  to the outlier $\lambda_{i_0}(M_N)$. Then, 
\begin{equation}\label{Eq-vect1}
  \sqrt{N} (|\langle u_{i_0}, v_{i_0}\rangle|^2 - \tau_N(\theta)) \vers^{(law)}_{N \rightarrow \infty} \left( P^*( c_{\theta, \sigma}  W_p  + Z_p)  P\right)_{11}
 \end{equation}
where $\tau_N(\theta)$ is defined by \eqref{tauN}.
 $W_p$ is the $p\times p$ upper left corner of the  Wigner matrix $W_N$, $Z_p$ is a centered Gaussian Hermitian matrix  of size $p$, independent from $W_p$,  with independent entries (modulo the symmetry condition). The diagonal coefficients are i.i.d. with variance  
 \begin{equation}
 \frac{1}{2}(m_4 - 3 \sigma^4) A_{\theta, \nu} + \sigma^4 B_{\theta, \nu}
 \end{equation}
and the off diagonal elements are iid complex Gaussian with distribution $Z$ such that $\mathbb E(Z^2) = 0$ and $\mathbb E(|Z|^2) =  \sigma^4 B_{\theta, \nu}$
 where 
 \begin{equation} \label{cov-diag2} 
   \left\{ \begin{array}{l}
   c_{\theta,\nu}  = \sigma^2 g''_\nu(\theta), \\
 \displaystyle   A_{\theta, \nu}=  -\frac{1}{6}{g_\nu'''(\theta)}(1+\sigma^2 g_\nu'(\theta))^2-2\sigma^4 (g_\nu^{''}(\theta))^2 g_\nu'(\theta) -\sigma^2(g_\nu^{''}(\theta))^2,\\
\displaystyle B_{\theta, \nu}= -\frac{1}{6}{g_\nu'''(\theta)} -\frac{\sigma^2}{2}({g_\nu''(\theta)})^2 \frac{1+2\sigma^2 g'_\nu(\theta)}{1+\sigma^2 g'_\nu(\theta)},
\end{array} \right. \end{equation}
$m_4$ is the fourth moment of $\mu$ and $g_\nu$ is the Stieltjes transform of $\nu$.
\end{theorem}
In particular, we readily deduce 
 the following corollary.
\begin{corollary} \label{theo-vect} Let $M_N$ be defined by \eqref{defMN}  with assumptions {\bf(W)} and {\bf (A')}. Assume moreover that  $A_N$ is a diagonal matrix ($p=1$).
 Let $u_{i_0}$, resp. $v_{i_0}$ be a unit eigenvector associated to the spike $\theta$ of $A_N$, resp. the outlier $\lambda_{i_0}(M_N)$.
Then,
\begin{equation}\label{Eq-vect}
  \sqrt{N} (|\langle u_{i_0}, v_{i_0}\rangle|^2 - \tau_N(\theta)) \vers^{(law)}_{N \rightarrow \infty}  c_{\theta, \nu} W_{11}  + Z
 \end{equation}
 where $Z$ is a centered Gaussian variable, independent from $W_{11}$, with variance : 
 \begin{equation} \label{cov-diag} 
 \frac{1}{2}(m_4 - 3 \sigma^4) A_{\theta, \nu} + \sigma^4 B_{\theta, \nu}.
 \end{equation}
See Eq.\eqref{cov-diag2} for the definitions of $c_{\theta, \sigma}$, $A_{\theta, \nu}$, $B_{\theta, \nu}$.
   \end{corollary}
Note that when the Wigner matrix is Gaussian, the choice of a diagonal matrix for $A_N$ is not a restriction, due to the unitary invariance of the G.U.E..\\


 The proof of  Theorem  \ref{casnondiag} 
 relies  upon a representation, through Helffer-Sj\"{o}strand formula, of the variable $|\langle u_{i_0}, v_{i_0}\rangle|^2$ in terms  of the $p\times p$-matrix valued process
$ \{ G_p(z), z \in \mathbb C \backslash \mathbb R \}$
where $G_p(z)$ denotes the principal submatrix of size $p$ of the resolvant matrix $G(z) = (zI_N-M_N)^{-1}$. Then, the fluctuations of the process $ \{ G_p(z), z \in \mathbb C \backslash \mathbb R \}$ are analysed using Schur's formula which enables to express $ \{ G_p(z), z \in \mathbb C \backslash \mathbb R \}$ in terms of random sesquilinear forms. This approach is described in Section \ref{common} where we also prove Theorem \ref{propcasnondiag2}. Section \ref{prelim}  gathers preliminary results used in the proof of the main results. Therein, first we recall classical algebraic identities, Helffer-Sj\"{o}strand's calculus, tightness criterion for  random analytic process and some basic facts on free convolution with a semicircular distribution; later we establish some extension of central limit theorem for random quadratic forms and recall or deduce some results on deformed Wigner matrices  from \cite{BC, CRMTA}.
In Section \ref{sectionvaleurspropres},   we first establish  
Theorem \ref{vpcasnondiag}, sticking to  the   approach of \cite{CRMTA}.
  The last Section is an appendix reminding the reader about Poicar\'e inequality and concentration phenomenon. \\
  
\noindent

For any integer number $k$, we will say that a matrix-valued    function $ f_N$ on $\mathbb{C}\setminus \R$
is  $O\left(\frac{1}{N^k}\right)$ if there exists a polynomial Q with nonnegative coefficients and an integer number $d$ such that for all large N,  for any $z$ in $\mathbb{C}\setminus \R$, 
$$\Vert f_N(z)\Vert \leq  \frac{Q(|\Im z |^{-1})(\vert z \vert +1)^d}{N^k}.$$
 For a family of  functions $f_N^{(i)}$, $i \in \{1,\ldots,N\}^2$, we will set $f_N^{(i)}=O^{(u)} \left(\frac{1}{N^k}\right)$ if for each $i$, $f_N^{(i)}=O \left(\frac{1}{N^k}\right)$ and moreover one can find a bound of 
the norm of each $f_N^{(i)}$  as above involving a common polynomial $Q$ and a common $d$, that is not depending on $i$. \\
For two sequences $(X_N)_N$ and $(Y_N)_N$ of random variables, $X_N = Y_N + o_{\mathbb P} (1)$ means that $X_N - Y_N \longrightarrow_{N \rightarrow \infty} 0$ in probability.

\section{Preliminaries}\label{prelim}
\subsection{Basic identities and inequalities}
For a matrix $A \in M_N({\mathbb{C}})$ and $I$ and $J$ non empty subsets of $\{1,\ldots ,N\}$, we denote by $A_{I \times J}$ the submatrix of $A$ obtained by keeping the rows with indices $i \in I$ and columns with indices $j \in J$. We set $A_I := A_{I \times I}$. 
\begin{proposition}[Schur inversion formula]\label{Schur} Let $I$ be a non-empty subset of $\{1,\ldots ,N\}$ 
	and $A\in M_N({\mathbb{C}})$ such that $A_{I}$ is invertible, then $A$ is invertible if and only if $A_{I^c}-A_{I^c\times I}A_{I}^{-1}A_{I\times I^c}$ is invertible, 
	in which case the following formulas hold:
	$$(A^{-1})_{I}=(A_I)^{-1}+ (A_I)^{-1}A_{I\times I^c}(A_{I^c}-A_{I^c\times I}A_{I}^{-1}A_{I\times I^c})^{-1}A_{I^c\times I}(A_I)^{-1},$$
	$$(A^{-1})_{I \times I^c}=-(A_I)^{-1}A_{I\times I^c}(A_{I^c}-A_{I^c\times I}A_{I}^{-1}A_{I\times I^c})^{-1},$$
	$$(A^{-1})_{I^c \times I}=-(A_{I^c}-A_{I^c\times I}A_{I}^{-1}A_{I\times I^c})^{-1} A_{I^c\times I}(A_I)^{-1},$$
	$$(A^{-1})_{I^c}=(A_{I^c}-A_{I^c\times I}A_{I}^{-1}A_{I\times I^c})^{-1}.$$
\end{proposition}
\begin{lemma}\label{majcarre}
For any matrix $B \in  M_N(\mathbb{C})$ 
and for any fixed $k$, we have \begin{equation}\label{O}  \sum_{l =1}^N |B_{lk} |^2 \leq  ||B ||^2\end{equation}
(or equivalently \begin{equation}\label{l} \sum_{l =1}^N |B_{kl} |^2 \leq  ||B ||^2.)\end{equation}
Therefore, we have 
 \begin{equation}\label{lp}
 \frac{1}{N} \sum_{k,l =1}^N |B_{kl} |^2 \leq ||B ||^2.
 \end{equation}
\end{lemma}
\begin{proof}
Note that \begin{eqnarray*} \sum_{l =1}^N |B_{lk} |^2 &=& \Tr_N(BE_{kk} B^*)\\&=&\Tr_N(  B^*B E_{kk})\\&\leq& 
 \Vert B \Vert^2 \Tr_N ( E_{kk})= \Vert B \Vert^2  .\end{eqnarray*}
Now, since  $$ \sum_{l =1}^N|B_{kl} |^2=\sum_{l =1}^N |B_{kl}^* |^2 = \sum_{l =1}^N |(B^*)_{lk} |^2  \; \mbox{and} \; \Vert B^* \Vert= \Vert B \Vert,$$
\eqref{O} and \eqref{l} can be deduced from each other thanks to conjugate transposition.
Finally \eqref{l} readily yields \eqref{lp}.
\end{proof}
\begin{lemma}\label{dvptdet}[Lemma A2 \cite{CRMTA}]
Let $A$ and $H$ be $m\times m$ matrices such that, for some $K>0$, \begin{equation}\label{K} \left\|A\right\| \leq K, \; \left\|H\right\| \leq K. \end{equation} Then 
$$ \det (A+H)=  \det (A)+\Tr_m \left( ^t com(A) H \right)+\epsilon,$$
where $com(A)$ denotes the comatrix of $A$, and  there exists a constant  $C_{m,K}>0$, only depending on $m$ and $K$,  such that  $\left| \epsilon\right| \leq C_{m,K} \left\| H\right\|^2.$
\end{lemma}
We will often use the following obvious facts that 
for any $z \in \C \setminus \mathbb{R}$, for any $N \times N $ Hermitian matrix $H$,
\begin{equation}\label{majres} \Vert \left(zI_N-H\right)^{-1}\Vert  \leq |\Im z|^{-1}\end{equation} 
 and for any probability measure $\nu$ on $\R$, the Stieltjes transform  $g_\nu$ satisfies for any $z \in \C \setminus \R$,
$\Im z \Im g_\nu(z) <0$, $\vert g_\nu(z) \vert \leq \vert \Im z \vert^{-1}$, and for any $\alpha\geq 0$, any $x\in \R$,
\begin{equation} \label{majts} \frac{1}{\vert z-\alpha g_\nu(z) -x\vert } \leq \vert \Im z \vert^{-1}.\end{equation}
\subsection{Helffer-Sj\"{o}strand's calculus}\label{HSc}
\subsubsection{Helffer-Sj\"{o}strand's representation  formula}
We recall  Helffer-Sj\"{o}strand's representation  formula (see \cite[Proposition C.1]{BG-K}): let $f \in C^{k+1}(\mathbb R)$ with compact support and $M$ be a Hermitian matrix; we have
\begin{equation} \label{HS}
 f(M) = \frac{1}{\pi} \int_{\mathbb C} \bar{\partial} F_k(f)(z)\ (M-z)^{-1} d^2z
 \end{equation}
where $d^2 z$ denotes the Lebesgue measure on $\mathbb C$,
\begin{equation} \label{defF_k}
F_k(f)(x+iy) = \sum_{l=0}^k \frac{(iy)^l}{l!} f^{(l)}(x) \chi(y)
\end{equation}
where $\chi : \mathbb R \to \mathbb R^+ $ is a smooth compactly supported function such that $\chi \equiv 1$ in a neighborhood of 0, and $\bar{\partial} = \frac{1}{2} (\partial_x +i \partial_y)$. \\
The function $F_k(f)$ coincides with $f$ on the real axis and is an extension to the complex plane. \\
Moreover \begin{equation}  \label{barpartial}
\bar{\partial} F_k(f)(x+iy)=\frac{1}{2}  \frac{(iy)^k}{k!} f^{(k+1)}(x) \chi(y)+ \frac{i}{2} \sum_{l=0}^k \frac{(iy)^l}{l!} f^{(l)}(x) \chi^{'}(y)\end{equation}
Thus, since $\chi \equiv 1$ in a neighborhood of 0,  we have that, in a neighborhood of the  real axis, 
\begin{equation} \label{real-axis}
 \bar{\partial} F_k(f)(x+iy) =\frac{1}{2} \frac{(iy)^k}{k!} f^{(k+1)}(x)  = O(|y|^k) \mbox { as } y \rightarrow 0. \end{equation}
\subsubsection{Computation of Helffer-Sj\"{o}strand's integral}
\begin{proposition} \label{prop-calculHS}
Let $h$ be a smooth function with compact support in  $(\rho_\theta - 2\delta, \rho_\theta + 2 \delta)$ and satisfying $h \equiv 1$ on $[\rho_\theta - \delta, \rho_\theta +  \delta]$.
Let $\chi$  be a compactly supported function on $(-L, L)$, and $\chi = 1$ around $0$. We denote by $D = (\rho_\theta - 2\delta, \rho_\theta + 2 \delta)\times (-L,L)$. \\
Let $\phi$ be a meromorphic function in $D$, with a pole in $\rho_\theta$.
Then,
\begin{equation} \label{calculHS}
I(\phi) := \frac{1}{\pi} \int_{\mathbb C} \bar{\partial} F_k(h)(z)\ \phi(z) d^2z = - {\rm Res}(\phi, \rho_\theta)
\end{equation}
where $F_k(h)$ is defined in \eqref{defF_k}. ${\rm Res} (\phi, \rho_\theta)$ denotes the residue of the function $\phi$ at the point $\rho_\theta$.
\end{proposition}
\begin{proof} Let $\epsilon $ small enough such that $F_k(h)(z) = 1$ for $z \in B(\rho_\theta, \epsilon)$.
Set $D_\epsilon = D \backslash B(\rho_\theta, \epsilon)$. $\phi$ is holomorphic on $D_\epsilon$.
Since $ F_k(h)$ has compact support in $D$, we have,
\begin{eqnarray*}
0 = \int_{\partial D} F_k(h)(z)\ \phi(z) dz &= & \int_{\partial D_\epsilon} F_k(h)(z) \phi(z) dz + 
\int_{\partial B(\rho_\theta, \epsilon)} F_k(h)(z)\ \phi(z) dz  \\
&=& 2i  \int_{D_\epsilon}  \bar{\partial}F_k(h)(z) \  \phi(z) d^2z + \int_{\partial B(\rho_\theta, \epsilon)}  \phi(z) dz 
\end{eqnarray*}
where the first term is obtained by Green's formula using that $\bar{\partial}\phi(z) = 0$ on $D_\epsilon$.
$$   \int_{\ D_\epsilon}  \bar{\partial}F_k(h)(z) \  \phi(z) d^2z\vers_{\epsilon \rightarrow 0}  \pi I(\phi)$$
and 
$$ \int_{\partial B(\rho_\theta, \epsilon)} F_k(h)(z)\ \phi(z) dz = 2i\pi {\rm Res} (\phi, \rho_\theta). \quad $$
\end{proof}
\subsection{Tightness criterion for a sequence of random analytic processes}

We recall here some results from \cite{S}. Let $D \subset \mathbb{C}$ be an open  set in
the complex plane. Denote by ${\cal H} (D)$  the space of complex analytic functions in $D$, endowed with the uniform topology on compact set. For $f \in {\cal H} (D)$ and $K$ a compact set of D, we denote $\Vert f \Vert_K =\sup_{z\in K} \left| f(z)\right|$. 
The space ${\cal H} (D)$ is equipped with the (topological)
Borel $\sigma$-field ${\cal B}({\cal H} (D))$ and the set of probability measures on $({\cal H} (D); {\cal B}({\cal H} (D)))$ is
denoted by ${\cal P}({\cal H}(D))$.
By a random analytic function on D we mean an ${\cal H} (D)$-valued
random variable  on a probability space. The probability
law of a random analytic function is uniquely determined by its finite dimensional
distributions.
\begin{proposition}\label{criterion}
(Proposition 2.5. in \cite{S}) Let $f_n$ be a sequence of random analytic functions in $D$. If $\Vert f_n\Vert_K$ is tight for any compact set K, then ${\cal L}(f_n)$ is
tight in ${\cal P}({\cal H}(D))$.
\end{proposition}
Using that, by Markov's inequality,   for any $C>0$ and any $r>0$, 
\begin{equation}\label{ineg0}\mathbb{P} \left( \left\| f_n \right\|_K >C \right) \leq \frac{1}{C^r} \mathbb{E} \left( \left\| f_n \right\|^r_K \right),\end{equation}
the following  lemma turns out to be useful to prove  tightness results.

\begin{lemma}\label{Shirai}(lemma 2.6 \cite{S})
For any compact set K in D, there exists $\delta > 0$ such that
$$\left\| f \right\|_K^r \leq (\pi \delta^2)^{-1} \int _{\overline{K_\delta}} \left| f(z)\right|^r m(dz), \; f \in {\cal H}(D),$$
for any $r > 0$, where $\overline{K_\delta}\subset D$ is the closure of the $\delta$-neighborhood of K  and $m$ denotes the Lebesgue measure.
\end{lemma}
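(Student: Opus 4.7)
The plan is to exhibit $|f|^p$ as a subharmonic function on $D$ and then apply the sub--mean value inequality on disks of radius $\delta$ centered at points of $K$. The choice of $\delta$ is dictated by geometry: since $K$ is compact in the open set $D$, one has $d(K,\partial D)>0$ (with the convention $\partial D=\emptyset$ if $D=\mathbb C$), so any $\delta$ strictly smaller than this distance makes $\overline{K_\delta}\subset D$, which is exactly what is needed in the statement.

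The core analytic input I would use is that for every $f\in \mathcal H(D)$ and every $p>0$ the function $|f|^p$ is subharmonic on $D$. For $p\geq 1$ this is immediate from Cauchy's formula: for any closed disk $\overline{D(z_0,r)}\subset D$,
$$f(z_0)=\frac{1}{2\pi}\int_0^{2\pi}f(z_0+re^{i\theta})\,d\theta,$$
and Jensen's inequality applied to the convex function $t\mapsto t^p$ yields
$$|f(z_0)|^p\leq \frac{1}{2\pi}\int_0^{2\pi}|f(z_0+re^{i\theta})|^p\,d\theta.$$
For $0<p<1$, where Jensen's inequality runs the other way, I would instead invoke the classical fact that $\log|f|$ is subharmonic on $D$ (allowing the value $-\infty$ at zeros of $f$), and then conclude that $|f|^p=\exp(p\log|f|)$ is subharmonic as a composition of a convex nondecreasing function with a subharmonic function.

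Once subharmonicity of $|f|^p$ is in hand, I would integrate the circle mean value inequality against the measure $r\,dr$ on $[0,\delta]$ to convert it to the solid disk form: for every $z_0\in K$,
$$|f(z_0)|^p\,\frac{\delta^2}{2}\leq \frac{1}{2\pi}\int_0^\delta\!\!\int_0^{2\pi}|f(z_0+re^{i\theta})|^p\,r\,dr\,d\theta=\frac{1}{2\pi}\int_{D(z_0,\delta)}|f(z)|^p\,m(dz),$$
so that
$$|f(z_0)|^p\leq \frac{1}{\pi\delta^2}\int_{D(z_0,\delta)}|f(z)|^p\,m(dz).$$
Since $D(z_0,\delta)\subset \overline{K_\delta}$ and the integrand is nonnegative, the right-hand side is bounded by $(\pi\delta^2)^{-1}\int_{\overline{K_\delta}}|f(z)|^p\,m(dz)$. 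Taking the supremum over $z_0\in K$ gives the claimed estimate.

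The only point that requires some care, and which I view as the main (mild) obstacle, is the case $0<p<1$: Jensen's inequality cannot be applied directly to $t\mapsto t^p$ on the Cauchy average, so the argument must genuinely go through subharmonicity of $|f|^p$ rather than through the circle mean value property of $|f|$ itself. Once one appeals to the standard fact that $\log|f|$ is subharmonic and that composition with a convex nondecreasing function preserves subharmonicity, this obstacle dissolves, and the polar-to-planar integration step above then works uniformly in $p>0$.
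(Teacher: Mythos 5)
Your argument is correct and complete: subharmonicity of $|f|^p$ for every $p>0$ (via Jensen for $p\ge 1$ and via $\exp(p\log|f|)$ for $0<p<1$), the choice $\delta<d(K,\partial D)$, and the polar integration of the circle sub-mean value inequality are exactly what is needed, and the final inclusion $D(z_0,\delta)\subset\overline{K_\delta}$ is handled properly. The paper itself gives no proof (it quotes the lemma from Shirai's Lemma 2.6), and your proof is the standard area sub-mean-value argument underlying that reference, so there is nothing to add.
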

\subsection{Free convolution with a semicircular distribution}\label{freeconv}

Let $\tau $ be a probability measure on $\R$. 
Its Stieltjes transform $g_\tau: z\mapsto \int_{\R} \frac{1}{z-x}d\tau(x)$ is analytic on the complex upper half-plane $\C^+$. 
There exists a domain $$D_{\alpha , \beta } = \{ u+iv \in \C, |u| < \alpha v, v > \beta \}$$
on which $g_\tau $ is univalent. 
Let $K_\tau $ be its inverse function, defined on $g_\tau (D_{\alpha , \beta })$, and 
$$R_\tau (z) = K_\tau (z) - \frac{1}{z}.$$
Given two probability measures $\tau $ and $\nu $, 
there exists a unique probability measure $\lambda $ such that
$$R_\lambda = R_\tau + R_\nu $$
on a domain where these functions are defined. 
The probability measure $\lambda $ is called 
the free convolution of $\tau $ and $\nu $ and denoted by $\tau \boxplus \nu $. 

The free convolution of probability measures has an important property, 
called subordination, which can be stated as follows: 
let $\tau $ and $\nu $ be two probability measures on $\R$; 
there exists an analytic map $\omega_{\tau,\nu}: \C^+ \rightarrow \C^+$
such that $$\forall z \in \C^+ ,  ~~~~g_{\tau \boxplus \nu}(z)= g_\nu (\omega_{\tau,\nu}(z)).$$
This phenomenon was first observed by D. Voiculescu under a genericity assumption in \cite{voic-fish1}, 
and then proved in generality in \cite{Biane98} Theorem 3.1. 
Later, a new proof of this result was given in \cite{BelBer07}, 
using a fixed point theorem for analytic self-maps of the upper half-plane. \\

In \cite{Biane97b}, P. Biane provides a deep study of the free convolution by a semicircular distribution. 
We first recall here some of his results that will be useful in our approach.
\noindent Let $\nu $ be a probability measure on $\R$ and $\mu_{sc}$ the semicircular distribution defined by \eqref{sc}. For any $z\in \C\setminus{\rm supp} (\mu_{sc}\boxplus \nu)$, the subordination function $\omega_{\mu_{sc}, \nu }$ is given by
\begin{equation}\label{omega} 
\omega_{\mu_{sc}, \nu }(z)=z-\sigma ^2g_{\mu _\sigma \boxplus \nu }(z).
\end{equation}
In the following, we will  denote $\omega_{\mu_{sc}, \nu } $ by $\omega$.
For any $x\in \C\setminus{\rm supp} (\nu)$, define  \begin{equation}\label{H}H(x) =x + \sigma^2 g_\nu(x).\end{equation} We have for any $x\in \C\setminus{\rm supp} (\mu_{sc} \boxplus \nu)$, \begin{equation}\label{composition}H(\omega(x))=x. \end{equation}
More precisely, the following one to one correspondance  holds:
\begin{equation}\label{correspondance}\mathbb{R}\setminus{\rm supp} (\mu_{sc} \boxplus \nu) \begin{array}{cc}\stackrel{{\omega}}{\longrightarrow} \\
 \stackrel{\longleftarrow}{{H}}
 \end{array}  ~\{ u \in \mathbb{R} \setminus {\rm supp~} \nu, \int \frac{1}{(u-x)^2}d\nu(x) <\frac{1}{\sigma^2}\}.\end{equation}
Assume that $A_N$ satisfies {\bf (A)}.
Let 
$\{ \theta _j, \, 1 \leq j \leq q\} $
be the spiked eigenvalue  of $A_{N}$ outside ${\rm supp}(\nu)$.
Furthermore, for all $\theta _j \in \Theta _{\sigma , \nu }$, where $\Theta _{\sigma , \nu }$ is defined by \eqref{cond-outlier}, we set 
\begin{eqnarray}
\rho _{\theta _j}:=H(\theta _j)=\theta _j+\sigma ^2g_\nu (\theta _j)
\end{eqnarray}
which is outside the support of $\mu _{sc} \boxplus \nu $ according to \eqref{correspondance}, 
and we define
$$
K_{\sigma , \nu }(\theta _1, \ldots , \theta _{q}):=
{\rm supp}(\mu _{sc} \boxplus \nu )\bigcup \left\{ \rho _{\theta _j}, \, \theta _j \in \, \Theta _{\sigma , \nu }\right\} .
$$
An important consequence of \eqref{correspondance} is the following
\begin{proposition}\label{Theo-InclSupportN}[Theorem 2.3 \cite{CP}] 
For any $\delta > 0$, 
$${\rm supp}(\mu _{sc} \boxplus \mu _{A_{N}})\subset 
K_{\sigma , \nu }(\theta _1, \ldots , \theta _{q}) + (-\delta, \delta),$$ 
when $N$ is large enough.
\end{proposition}


\subsection{Central limit theorem for processes of   matrix valued  random quadratic forms  }
\begin{lemma}[cf Lemma 2.7 \cite{BS}]\label{lemFQ} There exists $C>0$ such that 
for any  $N \times N$  deterministic matrix $B=(b_{ij})_{1\leq i,j\leq N}$, any random vectors $Y=\begin{pmatrix} y_1\\ \vdots \\ y_N \end{pmatrix}$  in  $\mathbb{C}^N$ with i.i.d.  standardized entries (~$\mathbb{E}(y_i)=0$, $\mathbb{E}( \vert y_i \vert^2)=1$, $\mathbb{E}(  y_i ^2)=0$) such that  $\mathbb{E}( \vert y_i \vert^4)< \kappa$  and any   independent copy $X$  of $Y$, one has
$$\mathbb{E}( \vert Y^*BY- \Tr_N( B) \vert^2 ) \leq C \kappa\Tr_N (B ^*B),$$
$$\mathbb{E}( \vert Y^*BX\vert^2 ) \leq C \kappa \Tr_N (B ^*B).$$
\end{lemma}
In  \cite{NY}, the authors establish the following variation around the central limit theorem for martingales.
\begin{lemma}[Lemma 5.6 \cite{NY}]\label{NaYa}
Suppose that for each $n$ $(Y_{nj}; 1\leq j \leq r_n)$ is a  $\mathbb{C}^d$-valued martingale difference sequence  with respect to the increasing $\sigma$-field $\{\mathcal {G}_{n,j}; 1\leq j\leq r_n\}$ having second moments. Write
$$Y_{nj}^T =( Y_{nj}^1,\ldots, Y_{nj}^d).$$
Assume moreover that 
$(\Theta_n(k,l))_n$ and $(\tilde \Theta_n(k,l))_n$ are uniformly bounded sequences of complex numbers, for $1\leq k,l\leq d.$ If 
\begin{equation}\label{Theta} \sum_{j=1}^{r_n}\mathbb{E}\left( Y_{nj}^k \bar{Y}_{nj}^l \; \left|\;\mathcal{G}_{n,j-1}\right. \right) -\Theta_n(k,l) \overset{P}{\underset{n\rightarrow +\infty}{\longrightarrow}} 0, \end{equation}
\begin{equation}\label{Thetatilde} \sum_{j=1}^{r_n}\mathbb{E}\left( Y_{nj}^k  Y_{nj}^l \; \left| \;\mathcal{G}_{n,j-1}\right. \right) -\tilde \Theta_n(k,l) \overset{P}{\underset{n\rightarrow +\infty}{\longrightarrow}} 0, \end{equation}
and for each $\epsilon >0$,
 $\sum_{j=1}^{r_n} \mathbb{E}\left( \Vert Y_{nj}\Vert^2 \1_{\Vert  Y_{nj}\Vert>\epsilon}\right)\rightarrow_{n\rightarrow +\infty} 0$,
then, for every bounded continuous function $f: \mathbb{C}^d \rightarrow \mathbb{R}$,
\begin{equation}\label{cvloi} \E f(\sum_{j=1}^{r_n}Y_{nj})-\E f(Z_n)  \underset{n\rightarrow +\infty}{\longrightarrow}0,\end{equation}
where $Z_n$ is a $\C^d$-valued centered Gaussian random vector with parameters 
$$\E(Z_nZ_n^*)=(\Theta_n(k,l))_{k,l}\; \mbox{and} \; \E(Z_nZ_n^T)=(\tilde \Theta_n(k,l))_{k,l}.$$
\end{lemma}
Following the lines of the proof of the central limit theorem for  quadratic forms by Baik and Silverstein in the appendix of \cite{CDF} and using  Lemma \ref{NaYa}, we will establish the following extension. 
\begin{proposition} \label{theoTCLquadratique} Let $(z_1, \ldots, z_q)$ be in $I^q$, where $I$ is a subset of $\mathbb{C}$ such that $\forall z \in I, \bar z \in I$.
 For any $z$ in $\{z_1, \ldots, z_q\}$, let $B(z)=(b_{ij}(z))$ be a $N \times N$   matrix such that $(B(z))^*=B(\bar z)$ and 
there exists a constant $a>0$ (not depending on $N$) such that for any $z$ in $\{z_1, \ldots, z_q\}$, $||
  B(z)|| \leq a$. Let $p$ be a fixed integer number and 
$Y_N=(y_{ij})_{1\leq i\leq N, 1 \leq j \leq p}$ be a $N\times p$ matrix  which contains i.i.d. complex
standardized entries with bounded fourth
moment and such that $\mathbb E (y_{11}^2)=0$.
  Set $$V_N= \Big( ({1}/{\sqrt N})(Y_N ^* B(z_1) Y_N -
{\rm{Tr}_N}( B(z_1)) I_p), \ldots,({1}/{\sqrt N}) (Y_N ^* B(z_q) Y_N -
{\rm{Tr}_N} (B(z_q))I_p) \Big ).$$
If there exists  uniformly bounded sequences $(f_N)_N$ and $(g_N)_N$ of functions on $I^2$ such that for any $z_l$, $z_k$, $$\frac{1}{N} \sum_{i=1}^N  
(B(z_l))_{ii} (B(z_k))_{ii}=f_N(z_l,z_k)+o(1),$$
and $$\tr_N(B(z_l)B(z_k))= g_N(z_l,z_k)+o(1),$$
then $d_{LP}\left( V_N , {\mathcal V}_N \right) \rightarrow 0$ where 
$ {\mathcal V}_N=({\cal G}_N(z_1),\ldots, {\cal G}_N(z_q))$,  ${\cal G}_N$ is  a centered Gaussian matrix valued process   whose distribution is given as follows : \\
 1) the processes $(({\mathcal G}_{N}(z))_{ij})_z$ for $1 \leq i \leq j \leq p$ are independent, and $({\mathcal G}_{N}(z))_{ji} = \overline{({\mathcal G}_{N}(\bar{z}))_{ij}}$. \\
 2) For $i \leq p$,
 \begin{eqnarray*}
 \mathbb E(({\mathcal G}_{N}(z_l))_{ii} ({\mathcal G}_{N}(z_k))_{ii} ) &= &(\E(\vert y_{11}\vert^4 -2)\left(f_N(z_l,z_k) \right) \nonumber\\
&&  +
 g_N(z_l,z_k)
\end{eqnarray*}
and
\begin{equation}
\mathbb E(({\mathcal G}_{N}(z_l))_{ii} \overline{({\mathcal G}_{N}(z_k))_{ii})} = \mathbb E(({\mathcal G}_{N}(z_1))_{ii}({\mathcal G}_{N}(\bar{z_2}))_{ii})
\end{equation}
2) For $1 \leq i \not= j \leq p$,
$$ \mathbb E(({\mathcal G}_{N}(z_l))_{ij} ({\mathcal G}_{N}(z_k))_{ij}) = 0$$
and \begin{equation} \label{nondiag}
\mathbb E(({\mathcal G}_{N}(z_l))_{ij}\overline{({\mathcal G}_{N}(z_k))_{ij}} ) = g_N(z_l,\bar{z_k}). \end{equation}
\end{proposition}
\begin{proof}
 First,  for any $B$ in $\{B(z), z=z_1,\ldots,z_q\}$ and any $1\leq s,t \leq p$,  one can write $({1}/{\sqrt N}) \Big ( Y_N ^* B Y_N -
{\rm{Tr}}( B) I_p \Big)_{st}$ as a sum of martingale differences:
\begin{eqnarray*}\lefteqn{(1/\sqrt N)(Y_N^*BY_N-{\rm{Tr}} (B) I_p)_{st}}\\&=&(1/\sqrt N) \sum_{i=1}^N
\bigg((\bar y_{is}y_{it}-\delta_{st})b_{ii}
+\bar y_{is}\sum_{j<i}y_{jt}b_{ij}+\bar y_{is}\sum_{j>i}y_{jt}b_{ij}\bigg)\\&
=&(1/\sqrt N) \sum_{i=1}^N
\bigg((\bar y_{is}y_{it}-\delta_{st})b_{ii}
+\bar y_{is}\sum_{j<i}y_{jt}b_{ij}+y_{it}\sum_{j<i}\bar y_{js}b_{ji}\bigg)
\\&=&\sum_{i=1}^N(Z_i(B))_{st}
\end{eqnarray*}
where
$$(Z_i(B))_{st}=(1/\sqrt N)
\bigg((\bar y_{is}y_{it}-\delta_{st})b_{ii}
+\bar y_{is}\sum_{j<i}y_{jt}b_{ij}+y_{it}\sum_{j<i}\bar y_{js}b_{ji}\bigg).
$$
Let $\mathcal F _{N,i}$  be the
$\sigma$-field generated by $\{y_{1s},\ldots,y_{is}, 1\leq s \leq p\}$. 
Let also $\mathbb{E}_i(\cdot)$ denote conditional expectation with
respect to $\mathcal F _{N,i}$. It is clear that  for any $B$ in $\{B(z), z\in I\}$, $Z_i(B)\in M_{p}(\mathbb{C})\simeq
 \mathbb{C}^{p^2}$ is measurable with respect to $\mathcal F _{N,i}$ and satisfies $\mathbb E_{i-1} ( Z_{i}(B))=0$.\\

We will show that the conditions of Lemma \ref{NaYa} are met for the $\mathbb{C}^{q p^2}$-valued martingale difference sequence 
$Y_{Ni}^T=(Z_i(B(z_1)),\ldots,Z_i(B(z_q))).$

\noindent Write $(Z_i(B))_{st}=X_1^i+X_2^i$, with $X_1^i=(1/\sqrt
N)
(\bar y_{is}y_{it}-\delta_{st})b_{ii}$. Then for $\epsilon>0$,
\begin{equation}\label{terme1}
\sum_{i=1}^N\mathbb{E}(|X^i_1|^2 \, 1_{(|X_1^i|\ge\epsilon)})\leq
a^2\mathbb{E}(|\bar y_{1s}y_{1t} -\delta_{st}|^2 \, 1_{\{|\bar y_{1s}y_{1t} -\delta_{st}|^2\ge\sqrt N\epsilon/a\}})\to0
\end{equation}
as $N \to \infty$, by dominated convergence theorem.\\
\noindent We have
\begin{eqnarray*}
\lefteqn{\mathbb{E}|\sum_{j<i}y_{jt}b_{ij}|^4}\\ & = & \mathbb{E}(|y_{1t}|^4\sum_{j<i}|b_{ij}|^4)
+2 \mathbb{E} (\sum_* |b_{ij_1}|^2|b_{ij_2}|^2) + \mathbb{E}(|y_{1t}^2|^2\sum_*
\vert b_{ij_1} \vert^2 \vert b_{ij_2}\vert^2 )\crcr
 & \leq &
 \mathbb{E}|y_{1t}|^4 \mathbb{E}[(\max_{i,j} \vert b_{ij}\vert)^2 \max_i (BB^*)_{ii}]+(2+\mathbb{E}|y_{1t}^2|^2)
 \mathbb{E}[(BB^*)_{ii}^2] \crcr
 & \leq & a^4 \big [ \mathbb{E}|y_{1t}|^4 + 2+\mathbb{E}|y_{1t}^2|^2 \big ]
\end{eqnarray*}
where the sum $\underset{*}{\sum}$ is over $\{j_1<i, \, j_2<i, \, j_1 \not = j_2 \}$.
Therefore $\mathbb{E}|X_2^i|^4=o(N^{-1})$ so that for any $\epsilon>0$,
\begin{equation}\label{terme2}
\sum_{i=1}^N\mathbb{E}(|X_2^i|^2 \, 1_{(|X_2^i|\ge\epsilon)})\leq(1/\epsilon^2)
\sum_{i=1}^N\mathbb{E}|X_2^i|^4
\to 0\quad \text{ as } N\to\infty.
\end{equation}
Thus, by (\ref{terme1}), (\ref{terme2}) and (A.4) in \cite{CDF}, $\{Z_i(B)\}$ satisfies the Lindeberg condition of Lemma \ref{NaYa}.\\

Now, we shall verify condition \eqref{Thetatilde} of Lemma \ref{NaYa}. We have for any $B$ and $C$ in $\{B(z), z\in I\}$, for any $1\leq s,t,s',t' \leq p,$
\begin{eqnarray}{\label{sum-cond1}}
\lefteqn{\sum_{i=1}^N\mathbb{E}_{i-1}(Z_i (B))_{st} Z_i(C)_{s't'})} \\
& =&   (1/N)\sum_{i=1}^N \bigg{\{} (\mathbb{E}|y_{11}|^4-1)\delta_{st}\delta_{ss'}\delta_{s't'}+ \delta_{st'}\delta_{s't}(1-\delta_{ss'})) b_{ii}c_{ii}
\\&& +\mathbb{E}(|y_{11}|^2\bar
y_{11})\left[ \delta_{st}\delta_{ss'}\sum_{j<i}y_{jt'}c_{ij} b_{ii} + \delta_{s't'}\delta_{ss'}\sum_{j<i}y_{jt}  b_{ij} c_{ii}\right] \nonumber \\
&&+\mathbb{E}(|y_{11}|^2y_{11})\left[\delta_{st} \delta_{tt'} \sum_{j<i}\bar y_{js'} c_{ji} b_{ii}+\delta_{s't'} \delta_{tt'} \sum_{j<i}\bar y_{js}b_{ji} c_{ii}\right]\\ 
&&
+ \delta_{s't}\sum_{j<i}\bar y_{js}b_{ji}\sum_{j<i} y_{jt'} c_{ij} + \delta_{st'}\sum_{j<i}\bar y_{js'}c_{ji}\sum_{j<i} y_{jt} b_{ij}\bigg{\}}.
\nonumber
\end{eqnarray}
Let $B_L$ (resp. $B_U$ denote the strictly lower (resp. upper) triangular part of $B$.
We have
using Cauchy-Schwarz's inequality that 
\begin{eqnarray*}
\mathbb{E}|(1/N)\sum_{i=1}^N c_{ii}\sum_{j<i}y_{jt}b_{ij}|^2 &=&
\mathbb{E}|(1/N)\sum_{j=1}^{N-1} y_{jt}\sum_{i>j}c_{ii}b_{ij}|^2\\
& = & (1/N^2) \mathbb{E}
(\sum_{j=1}^{N-1}\sum_{i>j}c_{ii}b_{ij}\sum_{\underline i>j}
\bar c_{\underline i\,\underline i}\overline b_{\underline ij})\\
&=&(1/N^2) \mathbb{E} ( \sum_{i\underline i}c_{ii}\bar c_{\underline
i\,\underline i} (B_LB_L^*)_{i\underline i})\\ & \leq &
\mathbb{E} \big [ (\max_i\vert c_{ii}\vert)^2(1/N) (\sum_{i\underline
i}|(B_LB_L^*)_{i\underline i}|^2)^{1/2} \big ]\\ & =
&\mathbb{E} \big [ (\max_i\vert c_{ii}\vert )^2(1/N)
{\rm{Tr}}((B_LB_L^*)^2)^{1/2} \big ]\crcr & \leq & \mathbb{E} \big
[ (\max_i\vert c_{ii}\vert) ^2(1/\sqrt N)\|B_L\|^2 \big ].
\end{eqnarray*}
We apply the following bound (due to R. Mathias, see \cite{Mt}):
$\|B_L\|\leq\gamma_N\|B\|$ where $\gamma_N=O(\ln N)$, and the bounds
$||B|| \leq a$, $||C|| \leq a$, to conclude that
$$(1/N) \sum_{i=1}^N c_{ii}\sum_{j<i}y_{jt}b_{ij} \overset{P}{\underset{N \rightarrow +\infty}{\longrightarrow}} 0.$$
Thus,
\begin{eqnarray*}
\lefteqn{ \sum_{i=1}^N\mathbb{E}_{i-1}(Z_i (B))_{st} Z_i(C)_{s't'})} \\
& =&   (1/N)\sum_{i=1}^N \bigg{\{}\left[ (\mathbb{E}|y_{11}|^4-1)\delta_{st}\delta_{ss'}\delta_{s't'}+ \delta_{st'}\delta_{s't}(1-\delta_{ss'})\right] b_{ii}c_{ii}
\\ 
&&
+ \delta_{s't}\sum_{j<i}\bar y_{js}b_{ji}\sum_{j<i} y_{jt'} c_{ij} + \delta_{st'}\sum_{j<i}\bar y_{js'}c_{ji}\sum_{j<i} y_{jt} b_{ij}\bigg{\}}+o_{\mathbb P}(1)\\
&  = & \left[(\mathbb{E}|y_{11}|^4-1)\delta_{st}\delta_{ss'}\delta_{s't'}+ \delta_{st'}\delta_{s't}(1-\delta_{ss'})\right]\frac{1}{N} \sum_{i=1}^N  b_{ii}c_{ii}\\&&+ \delta_{s't}(1/N)(Y_N)_s^*B_UC_L(Y_N)_{t'}  + \delta_{st'}(1/N)(Y_N)_{s'}^*C_UB_L(Y_N)_t +o_{\mathbb P}(1) 
\end{eqnarray*}
Besides, from Lemma \ref{lemFQ}  we have
\begin{eqnarray*}
\mathbb{E} \Big |(1/N)((Y_N)_s^*B_UC_L (Y_N)_{t'}-\delta_{st'}{\rm{Tr}} (B_U C_L) ) \Big
|^2 & \leq &
(1/N^2) \mathbb{E} ({\rm{Tr}}(C_L^*B_U^*B_UC_L ) \\
& \leq & K \mathbb{E}\|B\|^2 \|C\|^2\frac{\ln^4N} N\to_{N\to\infty}0.
\end{eqnarray*}
Hence,
\begin{eqnarray*} 
\lefteqn{\sum_{i=1}^N\mathbb{E}_{i-1} (Z_i (B))_{st}(Z_i(C))_{s't'}}\\&=& \left[(\mathbb{E}|y_{11}|^4-1)\delta_{st}\delta_{ss'}\delta_{s't'}+ \delta_{st'}\delta_{s't}(1-\delta_{ss'})\right]\frac{1}{N} \sum_{i=1}^N  b_{ii}c_{ii} \\&&+\delta_{st'}\delta_{s't}\frac{1}{N} \sum_{j<i}  b_{ij}c_{ji}+\delta_{s't}\delta_{st'}\frac{1}{N} \sum_{j<i}  b_{ji}c_{ij} +o_{\mathbb P}(1)\\
&=& \left[(\mathbb{E}|y_{11}|^4-1)\delta_{st}\delta_{ss'}\delta_{s't'}+ \delta_{st'}\delta_{s't}(1-\delta_{ss'})\right]\frac{1}{N} \sum_{i=1}^N  b_{ii}c_{ii} \\&&+\delta_{st'}\delta_{s't}\frac{1}{N} \mbox{Tr}(BC)-\delta_{s't}\delta_{st'}\frac{1}{N} \sum_{i}  b_{ii}c_{ii} +o_{\mathbb P}(1)
\\
&=& \left[(\mathbb{E}|y_{11}|^4-1)\delta_{st}\delta_{ss'}\delta_{s't'}- \delta_{st'}\delta_{s't}\delta_{ss'})\right]\frac{1}{N} \sum_{i=1}^N  b_{ii}c_{ii} \\&&+\delta_{st'}\delta_{s't} \tr_N(BC) +o_{\mathbb P}(1)
.\end{eqnarray*}
Proposition \ref{theoTCLquadratique} readily follows. \end{proof}
\subsection{Preliminary results on deformed Wigner matrices}\label{rappels}
\subsubsection{Preliminary results from \cite{BC} } 
Note that, in Section 5 in \cite{BC}, the authors consider for any fixed integer numbers  $m, r, t$ and any fixed  $m\times m$ Hermitian matrices $\gamma$, $\alpha_1, \ldots, \alpha_r, \beta_1,\ldots, \beta_t$,  the matrix model in $M_m(\C)\otimes M_N(\C)$, $\gamma \otimes I_N + \sum_{v=1}^r \alpha_v \otimes \frac{W_N^{(v)}}{\sqrt{N}} + \sum_{u=1}^t \beta_u \otimes A_N^{(u)} $ where the $W_N^{(v)}$'s are independent more general Wigner matrices than ours and the $A_N^{(u)}$'s are deterministic matrices such that $\sup_N\Vert A_N^{(u)} \Vert <\infty$. Therefore, the results therein apply to our model by choosing $m=1, r=t=1$ and $\alpha_1=1=\beta_1$.\\

For any $z\in \C\setminus \R$, 
$G(z) =[G_{ij}(z)]_{1\leq i,j\leq N}= (zI_N-M_N)^{-1}$ denotes the resolvant of $M_N$. Note that by \eqref{majres}, 
\begin{equation}\label{B} \Vert G(z) \Vert \leq |\Im z|^{-1}.\end{equation} 
 For any $z\in \C\setminus \R$,  define $g_N(z) =\mathbb{E}(\tr_N G(z))$ and denote by $ \tilde g_N(z)$  the Stieltjes transform of $\mu_{sc}\boxplus \mu_{A_N}$. 
\begin{lemma}\label{estimGij}
For any $(i,j)\in \{1,\ldots,N\}^2$, 
$$\E\left( G_{ij}(z)\right)= \left[\left( (z-\sigma^2 \tilde g_N(z))I_N-A_N\right)^{-1}\right]_{ij}+ O^{(u)}\left(\frac{1}{\sqrt{N}}\right).$$
\end{lemma}
\begin{proof}
 According to Corollary 5.5 in \cite{BC}, for any $(i,j)\in \{1,\ldots,N\}^2$, 
\begin{eqnarray*}\lefteqn{\E\left( G_{ij}(z)\right)}\\&=&\left( Y_N(z)\right)_{ij}\\&& + \frac{(1-\sqrt{-1}) \kappa_3}{2\sqrt{2} N\sqrt{N}}\sum_{s,l=1}^N 
\left( Y_N(z)\right)_{il}\left( Y_N(z)\right)_{ss}\left( Y_N(z)\right)_{ll}\E\left( G_{sj}(z)\right)\\&&+O^{(u)}_{ij}(\frac{1}{N})\end{eqnarray*}
where $$Y_N(z)= \left( (z-g_N(z))I_N -A_N\right)^{-1}$$
 and $\kappa_3$ is the third classical cumulant of $\mu$.
Note that $\vert \Im(z-g_N(z))\vert \geq \vert \Im z \vert$ so that, by \eqref{majres} \begin{equation}\label{majY} \Vert Y_N(z) \Vert \leq \frac{1}{\vert \Im z \vert}. \end{equation}
Now,\begin{eqnarray*}\lefteqn{\left|
\sum_{s,l=1}^N 
\left( Y_N(z)\right)_{il}\left( Y_N(z)\right)_{ss}\left( Y_N(z)\right)_{ll}\E\left( G_{sj}(z)\right) \right|}
\\&\leq & \sum_{s,l=1}^N 
\left|\left( Y_N(z)\right)_{il}\right|\left|\left( Y_N(z)\right)_{ss}\right| \left| \left( Y_N(z)\right)_{ll}\right| \left|\E\left(G_{sj}(z)\right) \right|
\\&\leq & 
\vert \Im z \vert^{-2} N \left(\sum_{s=1}^N  \left|\E\left(G_{sj}(z)\right) \right|^2 \right)^{1/2} \left(\sum_{l=1}^N  \left|\left( Y_N(z)\right)_{il}\right|^2 \right)^{1/2}\\&\leq & N \vert \Im z \vert^{-4}
\end{eqnarray*} where we used Lemma \ref{majcarre}, \eqref{majY} and \eqref{B}.
Therefore $$\E\left( G_{ij}(z)\right)=\left( Y_N(z)\right)_{ij}+O^{(u)}_{ij}(\frac{1}{\sqrt{N}}).$$
Now, according to (5.56) in \cite{BC}, $$\left\| Y_N(z) - \tilde Y_N(z) \right\| =O\left(\frac{1}{\sqrt{N}}\right)$$
where  \begin{equation}\label{deftildeY} \tilde Y_N(z) = \left( (z-\tilde g_N(z))I_N -A_N\right)^{-1}.\end{equation}
Lemma \ref{estimGij} readily follows.
 \end{proof}

\begin{lemma}\label{estimgmoinstildeg}
We have $$\forall z\in \C\setminus \R, \;g_N(z)=\tilde g_N(z) +O(\frac{1}{N}).$$
\end{lemma}
\begin{proof} According to 
Proposition 5.8 in \cite{BC}, we have 
$$g_N(z)-\tilde g_N(z)=\left(1 -\tilde g^{'}_N(z)\right) \tilde L_N(z)+O(\frac{1}{N\sqrt{N}})$$
where 
\begin{eqnarray*}\tilde L_N(z)&=&\frac{\kappa_4}{2N^3} \sum_{i,l=1}^N \left(\tilde  Y_N(z)^2\right)_{ll}\left[\left(\tilde  Y_N(z)\right)_{ii}\right]^2\left(\tilde  Y_N(z)\right)_{ll}\\&&+ \frac{\kappa_3(1+\sqrt{-1})}{2\sqrt{2}N^2\sqrt{N}} \sum_{i,l=1}^N \left(\tilde  Y_N(z)^2\right)_{ll}\left(\tilde  Y_N(z)\right)_{ii}\left(\tilde  Y_N(z)\right)_{li}\\&&
+ \frac{\kappa_3(1-\sqrt{-1})}{2\sqrt{2}N^2\sqrt{N}} \sum_{i,l=1}^N \left(\tilde  Y_N(z)^2\right)_{il}\left(\tilde  Y_N(z)\right)_{ii}\left(\tilde  Y_N(z)\right)_{ll}\\&&+ \frac{\kappa_3(1-\sqrt{-1})}{2\sqrt{2}N^2\sqrt{N}} \sum_{i,l=1}^N \left(\tilde  Y_N(z)^2\right)_{ll}\left(\tilde  Y_N(z)\right)_{il}\left(\tilde  Y_N(z)\right)_{ii},
\end{eqnarray*}
and $\tilde Y_N$ is defined by \eqref{deftildeY}.
(Note that in Proposition 5.8 in \cite{BC}, in full generality the  $ \left(\tilde  Y_N(z)\right)_{il}$'s are $m\times m$ matrices which a priori do not commute and  $\tilde G_N(zI_m)$ is a  $m\times m$ matrix too. But in the present paper, since $m=1$, $ \left(\tilde  Y_N(z)\right)_{il}$'s are scalar and obviously commute and $\tilde G_N(zI_m)=\tilde g_N(z)$.)\\
Note that,  $\vert \Im(z-\tilde g_N(z))\vert \geq \vert \Im z \vert$ so that, by \eqref{majres},  $$\Vert \tilde Y_N(z)\Vert \leq \vert \Im z \vert^{-1}.$$
Lemma \ref{estimgmoinstildeg} readily follows by using Cauchy-Schwarz's inequality and \eqref{lp}.
\end{proof}
Lemma 8.7 in \cite{BC} implies in particular the following variance estimates.
\begin{lemma}\label{var}
 $${ \rm Var}(G_{ij}(z))=O^{(u)}(1/N).$$
\end{lemma}
\begin{lemma}\label{vartrace}
$${ \rm Var}(\tr_N G(z))=O(1/N^2)$$
\end{lemma}
The following result is a corollary of  Theorem 1.1 in \cite{BC}.
\begin{proposition} \label{no} [Theorem 1.1 \cite{BC}] Let $[b; c]$ be a real interval such that there exists $ \delta  > 0$ such that, for any
large $N$, $[b+\delta; c+\delta]$ lies outside the support of $\mu_{sc}\boxplus \mu_{A_N}$.
 Then, almost surely, for all large $N$, there is no eigenvalue of
$M_N$ 
in $[b; c]$.
\end{proposition}
\subsubsection{Quantitative asymptotic freeness}\label{qaf}


Let $\epsilon_0>0$ be fixed  such that $d(\rho_\theta, {\rm supp}(\mu _{sc} \boxplus \nu) \cup \{\rho_{\theta_j}, \theta_j\neq \theta\})>\epsilon_0$ and 
$d(\theta, {\rm supp}(\nu) \cup \{\theta_j, \theta_j\neq \theta\}) >\epsilon_0$. Let  $\Omega_N$ be the event on which there is no eigenvalue 
of $\frac{W_{N-p}}{\sqrt{N}} +A_{N-p}$ in $]\rho_\theta -\epsilon_0;\rho_\theta+\epsilon_0[$, $\lambda_{i_0}(M_N)$ is the unique  eigenvalue of $M_N$ in $]\rho_\theta -\epsilon_0/2;\rho_\theta+\epsilon_0/2[$ and $\left\|\frac{W_{N}}{\sqrt{N}}\right\|\leq 3$.
Propositions \ref{Theo-InclSupportN}, \ref{no} applied to $M_{N-p}$,  Proposition \ref{propCVoutlier} and Bai-Yin's theorem lead that
\begin{equation}\label{OmegaN}\lim_{N\rightarrow +\infty} \1_{\Omega_N}=1, ~\text{a.s.}\end{equation}

Denoting by  $\hat G_{N-p}$  the resolvent of the lower right submatrix of size $N-p$ of $M_N$ and $\rho_N$ being defined by \eqref{rhon},  we have the following
\begin{proposition}\label{propdelta}
$$\sqrt{N}\left\{ \tr_{N-p}  \hat G_{N-p}(\rho_N )\1_{ \Omega_{N}} -\int \frac{d\mu_{sc} \boxplus \mu_{A_{N-p}}(x)}{\left( \rho_N -x\right)} \right\}$$ goes to zero in probability. \end{proposition}
 \begin{proof}
 We stick  to the proof of Proposition 5.5 in \cite{CRMTA}.
Using  Proposition \ref{Theo-InclSupportN}, for $N$ large enough,
\begin{equation}\label{distinfini} d(\rho_N, {\rm supp}(\mu _{sc} \boxplus \mu _{A_{N-p}}))>\epsilon_0/2\end{equation} and on $ \Omega_{N}$,
$
d\left(\{\rho_N,\lambda_{i_0}(M_N)\}, \text{spect}\left( \frac{W_{N-p}}{\sqrt{N}}+ A_{N-p}\right)\right)>\epsilon_0/2,$
so that \begin{equation}\label{majnormehat1}
\left\| \hat G_{N-p}(\rho_N) \right\| \leq \frac{2}{\epsilon_0},\; 
\left\| \hat G_{N-p}(\lambda_{i_0}(M_N)) \right\| \leq \frac{2}{\epsilon_0}.\end{equation}
Moreover,  there exists $K>0$ such that for any $x\in {\rm supp}(\mu _{sc} \boxplus \mu _{A_{N-p}})$, $$|  \rho_N- x| \leq K\;
 \mbox{and on }\;  \Omega_{N},  \;\left\| (\rho_N  I_{N-p}- \frac{W_{N-p}}{\sqrt{N}}- A_{N-p}\right\| \leq K.$$ 
 Let $g: \mathbb{R}\rightarrow \mathbb{R}$ be a ${\cal C}^\infty $ function with  support in $\{\epsilon_0/4\leq \vert x \vert \leq 2K\}$
 and such that  $g\equiv 1$ on $\{\epsilon_0/2\leq \vert x \vert \leq K\}$.
 $f:x\mapsto \frac{g(x)}{x}$ is a ${\cal C}^\infty $ function with compact support.  
 Note that 
 \begin{equation}\label{fetphi}\int \frac{d\mu_{sc} \boxplus \mu_{A_{N-p}}(x)}{\left( \rho_N -x\right)}=\int f\left(\rho_N   - x \right)d\mu_{sc} \boxplus \mu_{A_{N-p}}(x)\end{equation} 
 \begin{equation}\label{zerof} \mbox{and 
on }\; \Omega_{N},\; \hat G_{N-p}(\rho_N )= f\left(\rho_N  I_{N-p} -\frac{W_{N-p}}{\sqrt{N}}- A_{N-p} \right). \end{equation}
According to Lemma \ref{estimgmoinstildeg}, for any $z\in \mathbb{C}\setminus \mathbb{R}$, \begin{equation}\label{etoile}\sqrt{N} \tr_{N-p}\mathbb{E}\left[ \hat G_{N-p} (\rho_N  -z)\right] =
 \sqrt{N}\frac{d\mu_{sc} \boxplus \mu_{A_{N-p}}(x)}{\left( \rho_N -z-x\right)} + o^{(z)}(1),\end{equation}
  where there exist polynomials $Q_1$ and $Q_2$ with non negative coefficients and $(d,k) \in \mathbb{N}^2$ such that \begin{equation}\label{doubleetoile}\Vert o^{(z)}(1) \Vert \leq \frac{Q_1(\vert \Im z \vert^{-1})(\vert z \vert +1)^d}{\sqrt{N}} \leq \frac{1}{\sqrt{N}}\frac{Q_2(\vert \Im z\vert)(\vert z \vert +1)^d}{\vert \Im z \vert^k}.\end{equation}

 Therefore, by Helffer-Sj\"{o}strand functional calculus (see Section \ref{HSc}),
 $$\sqrt{N}\tr_{N-p} \mathbb{E}\left( f\left(\rho_N  I_{N-p} - \frac{W_{N-p}}{\sqrt{N}}- A_{N-p} \right)\right) $$
 $$=\frac{1}{\pi} \int_{\mathbb{C}\setminus \mathbb{R}} \bar \partial F_k(f) (z)\sqrt{N}  \tr_{N-p}\mathbb{E}\left[\hat  G_{N-p} (\rho_N -z)\right]  d^2z$$
and $$\sqrt{N}  \int  f\left((\rho_N  - x \right)d\mu_{sc} \boxplus \mu_{A_{N-p}}(x)=\frac{1}{\pi} \int _{\mathbb{C}\setminus \mathbb{R}} \bar \partial F_k(f) (z)\sqrt{N}   \frac{d\mu_{sc} \boxplus \mu_{A_{N-p}}(x)}{\left( \rho_N -z-x\right)}
 d^2z.$$
Hence, using  \eqref{etoile} and \eqref{fetphi}, we can deduce that  \begin{eqnarray*}
\lefteqn{\sqrt{N} \tr_{N-p} \mathbb{E}\left( f\left(\rho_N I_{N-p} - \frac{W_{N-p}}{\sqrt{N}}- A_{N-p} \right)\right)}\\&=& \sqrt{N} \int \frac{d\mu_{sc} \boxplus \mu_{A_{N-p}}(x)}{\left( \rho_N -x\right)}+  \frac{1}{\pi} \int _{z \in \mathbb{C}\setminus \R} \partial F_k(f) (z)o^{(z)}(1) d^2z. \end{eqnarray*}
 Note that since $f$ and $\chi$ are compactly supported, the last integral is an integral on a bounded set of $\C$ and according to \eqref{doubleetoile} and \eqref{real-axis},
 $$\left\|\frac{1}{\pi} \int_{\mathbb{C}\setminus \mathbb{R}} \partial F_k(f) (z)o^{(z)}(1) d^2z \right\|\leq \frac{C}{\sqrt{N}}.$$
Thus, \begin{equation}\label{un}\sqrt{N}\left\{ \mathbb{E}  \tr_{N-p} \left( f\left(\rho_N  I_{N-p} - \frac{W_{N-p}}{\sqrt{N}}- A_{N-p} \right)\right)-  \int \frac{d\mu_{sc} \boxplus \mu_{A_{N-p}}(x)}{\left( \rho_N -x\right)}\right\}  \rightarrow_{N\rightarrow +\infty}0.\end{equation}
Define
 $k: M_{N-p}^{sa}(\mathbb{C}) \rightarrow \mathbb{C}$ by $$k(X)=  \tr_{N-p}\left[  f\left(\rho_N  I_{N-p} - X- A_{N-p}
  \right)\right].$$
 Applying Poincar\'e inequality, we get that 
 $$\mathbb{E}\left( \left| k(\frac{W_{N-p}}{\sqrt{N}}) -\mathbb{E}(k(\frac{W_{N-p}}{\sqrt{N}}))\right|^2 \right) \leq \frac{C}{N} \mathbb{E}\left( \left\| \text{grad} k\left(\frac{W_{N-p}}{\sqrt{N}}\right)\right\|_e^2 \right),$$
with
 $$ \left\| \text{grad} k(X)\right\|_e^2 =\sup_{w\in S_1(M_{N-p}^{sa}(\mathbb{C}))} \left| \frac{d}{dt} k(X+tw)_{\vert_{t=0}} \right|^2.$$
%
Since $f$ is a Lipschitz function on $\mathbb{R}$ with  Lipschitz constant $C_L$, its extension on Hermitian matrices is $C_L$-Lipschitz with respect to the norm $\Vert M\Vert_e=(\Tr_{N-p} M^2)^{1/2}$.
Therefore, 
$$\sup_{w\in S_1(M_{N-p}^{sa}(\mathbb{C}))} \left| \frac{d}{dt} k(X+tw)_{\vert_{t=0}} \right|^2 \leq \frac{C}{N},$$  and then 
$$ \mathbb{E}\left( \left| \sqrt{N}\left\{ k\left(\frac{W_{N-p}}{\sqrt{N}}\right) -\mathbb{E}\left(k\left(\frac{W_{N-p}}{\sqrt{N}}\right)\right)\right\}\right|^2 \right) \leq \frac{C}{N}.$$
It readily follows that  \begin{equation}\label{deux} \sqrt{N} \tr_{N-p} \left\{ f(\rho_N  I_{N-p} - \frac{W_{N-p}}{\sqrt{N}}- A_{N-p}) -   \mathbb{E}  \left( f(\rho_N  I_{N-p} -\frac{W_{N-p}}{\sqrt{N}}- A_{N-p})\right)\right\}=o_{\mathbb{P}}(1).\end{equation}
Proposition \ref{propdelta} follows from \eqref{zerof}, \eqref{un}, \eqref{deux} and \eqref{OmegaN}.
 \end{proof} 
\section{Proof of Theorem \ref{vpcasnondiag}}\label{sectionvaleurspropres}
The approach to prove \eqref{cvprincipal} is the one of \cite{CRMTA}. 
On $\Omega_N$, defined at the beginning of Section \ref{qaf},  we have by Proposition \ref{Schur}, 
 \begin{equation}\label{det}\det  \left( X_p(N)\right) =0,\end{equation}
 where $$X_p(N)=  \lambda_{i_0}(M_N)I_p   -  \frac{W_{p}}{\sqrt{N}} -A_p - \frac{1}{N} Y^*  \hat G_{N-p}( \lambda_{i_0}(M_N))  Y,$$ 
and $\hat G_{N-p}$ is the resolvent of $ \frac{W_{N-p}}{\sqrt{N}}+ A_{N-p}$.
Let $\rho_N$ be as defined by \eqref{rhon}. Using 
the identity 
$$\hat G_{N-p}(\rho_N )-\hat G_{N-p}( \lambda_{i_0}(M_N) )=
(\lambda_{i_0}(M_N)-\rho_N)\hat G_{N-p}(\rho_N ) \hat G_{N-p}( \lambda_{i_0}(M_N) ),$$
we have 
$$X_p(N)
=H_p(N) + X_p^{(0)},$$
where $$X_p^{(0)}=\theta I_p -A_p,$$
\begin{eqnarray*}H_p(N)&=& ( \lambda_{i_0}(M_N)- \rho_N)I_p -\Delta_1(N)-\Delta_2(N) \\ &&+( \lambda_{i_0}(M_N)- \rho_N)r_1(N) - \frac{W_{p}}{\sqrt{N}} -(\lambda_{i_0}(M_N)- \rho_N)^2r_2(N)\end{eqnarray*}
with $$r_1(N)= \frac{1}{N} Y^*\hat G_{N-p}(\rho_N)^2\1_{ \Omega_{N}} Y,$$
\begin{eqnarray*}r_2(N) &=&\frac{1}{N} Y^*\hat G_{N-p}(\rho_N )^2   \hat G_{N-p}(\lambda_{i_0}(M_N))\1_{\Omega_{N}}Y,\end{eqnarray*}
$$\Delta_{1}(N)=\frac{1}{N} Y^*\hat G_{N-p}(\rho_N )\1_{ \Omega_{N}} Y -  \frac{\sigma^2}{N}\Tr_{N-p} \left( \hat G_{N-p}(\rho_N )\1_{ \Omega_{N}} \right),$$
$$\Delta_{2}(N)= \frac{1}{N}\Tr_{N-p}\left( \hat G_{N-p}(\rho_N )\1_{ \Omega_{N}} \right)  - \int \frac{d\mu_{sc} \boxplus \mu_{A_{N-p}}(x)}{\left( \rho_N -x\right)}.$$
\noindent First, by Lemma \ref{lemFQ} we have that,  
$$r_1(N) - \sigma^2\frac{1}{N}\Tr_{N-p}( \hat G_{N-p}(\rho_N )^2)\1_{\Omega_{N}} I_p= o_\mathbb{P}(1).$$
By \eqref{OmegaN}, \eqref{zerof}, \eqref{convrho}, \eqref{distinfini} and asymptotic freeness of $\frac{W_{N-p}}{\sqrt{N}}$ and $A_{N-p}$ (see Theorem 5.4.5 \cite{AGZ}),
\begin{equation}\label{cvquad1} \frac{1}{N}\Tr_{N-p}( \hat G_{N-p}(\rho_N )^2)\1_{ \Omega_{N}} \vers_{N \rightarrow \infty}\int \frac{d\lambda(x)}{\left( \rho_\theta -x\right)^{2}} \mbox{~almost surely}.\end{equation}
Therefore, 
\begin{equation}\label{convr1}
r_1(N) \vers^{\mathbb P}_{N \rightarrow \infty} \sigma^2 \int \frac{d\lambda(x)}{( \rho_\theta -x)^{2}}I_p.
\end{equation}
Now on $\Omega_N$, using \eqref{majnormehat1},
\begin{equation}\label{r2}\Vert r_2(N)\Vert  \leq   \left\| \hat G_{N-p}(\rho_N )\1_{ \Omega_{N}}\right\|^2 \left\| \hat G_{N-p}(\lambda_{i_0}(M_N) )\1_{ \Omega_{N}}\right\| \frac{\Vert Y \Vert^2}{N}\leq 9 \left( \frac{2}{\epsilon_0}\right)^2.\end{equation}
\noindent By Lemma \ref{lemFQ}, \begin{equation}\label{delta1zero}\Delta_1(N) =o_{\mathbb{P}}(1).\end{equation}
\begin{lemma}\label{cvquad}
$$\frac{1}{N-p}\sum_{j=1}^{N-p} \left[ \hat G_{N-p}(\rho_N)_{jj}\right]^2 \1_{\Omega_N}= \frac{1}{N-p}\sum_{j=1}^{N-p}  \left[ [(\theta-A_{N-p})^{-1} ]_{jj}\right]^2 +o_{\mathbb{P}}(1).$$
\end{lemma}
\begin{proof}
By \eqref{I_N} which will be proved below, 
for any $r>0$, $$\frac{1}{N-p}\sum_{j=1}^{N-p} \left[ \hat G_{N-p}(\rho_\theta+ \frac{i}{r})_{jj}\right]^2 \1_{\Omega_N}= \frac{1}{N-p}\sum_{j=1}^{N-p}  \left[ [(\omega(\rho_\theta+ \frac{i}{r})-A_{N-p})^{-1} ]_{jj}\right]^2 +o_{\mathbb{P}}(1),
$$ with $\omega$ defined by \eqref{omega}.
Now, using resolvent identity, one can easily obtain that there exists some constant $C(\epsilon_0)$ such that for any $r>0$ and any $N$, 
$$\left|\frac{1}{N-p}\sum_{j=1}^{N-p} \left[ \hat G_{N-p}(\rho_N)_{jj}\right]^2 \1_{\Omega_N}-\frac{1}{N-p}\sum_{j=1}^{N-p} \left[ \hat G_{N-p}(\rho_\theta+ \frac{i}{r})_{jj}\right]^2 \1_{\Omega_N}\right|$$ $$\leq  C(\epsilon_0)\left(\frac{1}{r}  +\rho_N-\rho\right). $$
Moreover, $\omega(\rho_\theta)=\theta$ so that for all large $N$, $d(\omega(\rho_\theta),  {\rm supp}(\mu_{A_{N-p}})> \epsilon_0/2.$
Now, choose $r_0$ large enough such that 
 for all large $N$, 
$\forall r \geq r_0$, $d(\omega(\rho_\theta+\frac{i}{r}),  {\rm supp}(\mu_{A_{N-p}})> \epsilon_0/4.$
Using resolvent identity, one can easily obtain that there exists some constant $C(\epsilon_0)$ such that 
 for all large $N$, 
$\forall r \geq r_0$, 
$$\left|\frac{1}{N-p}\sum_{j=1}^{N-p}  \left[ [(\omega(\rho_\theta+ \frac{i}{r})-A_{N-p})^{-1} ]_{jj}\right]^2 -\frac{1}{N-p}\sum_{j=1}^{N-p}  \left[ [(\omega(\rho_\theta)-A_{N-p})^{-1} ]_{jj}\right]^2\right|$$$$\leq C(\epsilon_0)/r.$$
Lemma \ref{cvquad} follows by letting $N$ go to infinity and then r go to infinity.\end{proof}
\noindent \eqref{cvquad1}, Lemma \ref{cvquad} and  Proposition \ref{theoTCLquadratique} yield that \begin{equation} \label{Delta1}\sqrt{N}  \Delta_1(N)^2=o_{\mathbb{P}}(1),\end{equation}
Now, one can prove that by Proposition \ref{propdelta}, we have
\begin{equation}\label{Delta2}
\sqrt{N} \Delta_2(N)=o_{\mathbb{P}}(1).
\end{equation}
 \noindent Thus \eqref{convlambda}, \eqref{convrho},  \eqref{convr1}, \eqref{r2}, \eqref{delta1zero} and  \eqref{Delta2} yield that  \begin{equation} \label{H} H_p(N)=o_{\mathbb{P}}(1).\end{equation}
\noindent Therefore, according to  Lemma \ref{dvptdet} (using  \eqref{H}) and  \eqref{det}, with a probability going to one as $N$ goes to infinity,   \begin{eqnarray*} 0& = &\det X_p(N)\\
&=& \det(X_p^{(0)}+H_p(N))\\&=& \det (X_p^{(0)}) +\Tr_p \left[B_{X_p^{(0)}} H_p(N)\right]+\epsilon_N\\&=& \Tr_p \left[B_{X_p^{(0)}} H_p(N)\right]+\epsilon_N,
\end{eqnarray*}
where $$B_{X_p^{(0)}}=^t com(X_p^{(0)}),$$
$$\epsilon_N=O(\Vert H_p(N)\Vert^2).$$  Thus, using 
\eqref{convlambda}, \eqref{convrho}, \eqref{convr1}, \eqref{r2}, \eqref{Delta2} and  \eqref{Delta1},  $$\sqrt{N}\epsilon_N =o_{\mathbb{P}}( \sqrt{N} (\lambda_{i_0}(M_N)-\rho_N)) +o_{\mathbb{P}}(1).$$
Hence, with a probability going to one as $N$ goes to infinity,
$$\sqrt{N} ( \lambda_{i_0}(M_N)-\rho_N) \left[ Tr_p  B_{X_p^{(0)}}(I_p+r_1(N)) +o_{\mathbb{P}}(1)\right]$$ $$
= Tr_p \left[ B_{X_p^{(0)}} \left(\sqrt{N} \Delta_1(N) +W_{p} \right)\right] +o_{\mathbb{P}}(1).$$
 \eqref{cvprincipal} readily follows from  \noindent \eqref{cvquad1}, Lemma \ref{cvquad}, Proposition \ref{theoTCLquadratique}, the independence of $ \Delta_1(N)$ and $W_{p}$.
When $A_{N-p}$ is diagonal, the result follows using \eqref{I_N}.

\section{Proofs of Theorems  \ref{casnondiag} and \ref{propcasnondiag2}}\label{common}
 Let $M_N$ be defined by \eqref{defMN}  with assumptions {\bf(W)} and {\bf (A')}.
We denote by $\lambda_i(A_N)$, resp. $\lambda_i(M_N)$, the eigenvalues of $A_N$, resp. $M_N$ and $u_i$, resp. $v_i$ the normalized associated eigenvectors. 
According to assumption {\bf (A)}, there exists $\delta>0$ such that  for all large $N$,  the distance from $\theta$ to the rest of the spectrum
(that is the other eigenvalues of $A_N$  except $\theta$)  is greater than $\delta$.
Moreover, from Proposition \ref{propCVoutlier}, we know that a.s. \begin{equation}
\label{CVlambdaps}
 \lambda_{i_0}(M_N) \longrightarrow_{N\rightarrow +\infty} \rho_\theta= \theta + \sigma^2 g_\nu(\theta).
\end{equation}  and there exists $\delta_0>0$ such that almost surely for all large $N$,  the distance from $\rho_\theta$ to the rest of the spectrum
(that is the other eigenvalues of $M_N$  except $\lambda_{i_0}(M_N)$)  is greater than $\delta_0$.\\
Throughout this section,  $h$ is a smooth function   with support in $]\rho_\theta-\delta_0/2; \rho_\theta+\delta_0/2[$ which is equal to 1 near $\rho_\theta$.
\subsection{Representation in terms of resolvent}
The aim of this section is to be brought back to the study of the fluctuations of the $p\times p$-matrix valued process
$ \{ G_p(z), z \in \mathbb C \backslash \mathbb R \}$
where $G_p(z)$ denotes the principal submatrix of size $p$ of the resolvant matrix $G(z) = (zI_N-M_N)^{-1}$.
\begin{proposition}\label{backtoresolvent} Almost surely, for all large $N$,

~~
\\
$ \sqrt{N} (|\langle u_{i_0}, v_{i_0}\rangle|^2 -\tau_N(\theta)) $
$$=  -  \frac{1}{\pi} \int_{\mathbb C} \bar{\partial} F_k(h)(z)   \left( P^* \sqrt{N}\left(G_{p}(z)-\Lambda_p(z) \right) P \right)_{11}d^2z , $$
where $ \tau_N(\theta)$ is defined by \eqref{tauN}, \begin{equation}\label{deflambdap}\Lambda_p(z) = (zI_p - A_p - \sigma^2 \tilde{g}_{N-p}(z)I_p)^{-1}\end{equation}
and  $\tilde{g}_{N-p}(z)$ is the Stieltjes transform of $\mu_{sc} \boxplus \mu_{A_{N-p}}$. 
\end{proposition}
Proposition \ref{backtoresolvent} readily follows from the  two  preliminaries Lemmas  \ref{ecritureG} and \ref{ecrituretautilde}.
\begin{lemma}\label{ecritureG} Almost surely, for all large $N$, for any integer number $k$, 
 \begin{equation} \label{HS2} |\langle u_{i_0}, v_{i_0}\rangle|^2 = -  \frac{1}{\pi}  \int_{\mathbb C} \bar{\partial} F_k(h)(z) \left( P^*  G_{p}(z)P \right)_{11} d^2z, \end{equation}
 where $ F_k(h)$ is defined by \eqref{defF_k}.
\end{lemma}
\begin{proof}
Let   $f$ be any smooth function with support in $]\theta-\delta/2; \theta+\delta/2[$ which  is equal to 1 near  $\theta$.
From the formula 
\begin{equation} \label{Trace}
 \Tr_N(h(M_N) f(A_N)) = \sum_{i,j=1}^N h(\lambda_i(M_N)) f(\lambda_j(A_N)) |\langle u_j, v_i \rangle|^2, \end{equation}
we easily deduce that, almost surely, for all large $N$,
\begin{equation} \label{def-h}
 |\langle u_{i_0}, v_{i_0}\rangle|^2 = \sum_{i,j=1}^p(P^*)_{1i} h(M_N)_{ij}P_{j1}.
 \end{equation}
By Helffer-Sj\"{o}strand's representation  formula \eqref{HS}, we can  write $h(M_N)_{ij}$ as
$$
h(M_N)_{ij} = -  \frac{1}{\pi}  \int_{\mathbb C} \bar{\partial} F_k(h)(z) \ G_{ij}(z) d^2z,
$$
so that \begin{equation} \label{HS2}\sum_{i,j=p}^p(P^*)_{1i} h(M_N)_{ij}P_{j1}= -  \frac{1}{\pi}  \int_{\mathbb C} \bar{\partial} F_k(h)(z) \left( P^*  G_{p}(z)P \right)_{11} d^2z. \end{equation}
Lemma \ref{ecritureG} follows from \eqref{def-h} and \eqref{HS2}.
\end{proof}

\begin{lemma}\label{ecrituretautilde} For $N$ large enough, 
\begin{eqnarray*} \tau_N(\theta) &=& 1 - \sigma^2 \int \frac{1}{(\theta -x)^2} d\mu_{A_{N-p}}(x)\\&=&- \frac{1}{\pi} \int_{\mathbb C} \bar{\partial} F_k(h)(z)  \left(  P^*(zI_p - A_p - \sigma^2 \tilde{g}_{N-p}(z)I_p)^{-1}P\right)_{11}d^2z\end{eqnarray*} where $\tilde{g}_{N-p}(z)$ is the Stieltjes transform of $\mu_{sc} \boxplus \mu_{A_{N-p}}$. 
\end{lemma}
\begin{proof}
Note that  $$\left(  P^*(zI_p - A_p - \sigma^2 \tilde{g}_{N-p}(z)I_p)^{-1}P\right)_{11}=\frac{1}{z-  \sigma^2 \tilde{g}_{N-p}(z) -\theta}.$$
Let us  define 
for any $z\in \C\setminus{\rm supp} (\mu_{A_{N-p}})$,   \begin{equation}\label{HNmoinsun}H_{N-p}(z) =z + \sigma^2 g_{\mu_{A_{N-p}}}(z)\end{equation} and  for any $z\in \C\setminus{\rm supp} (\mu_{sc} \boxplus \mu_{A_{N-p}})$,
 \begin{equation}\label{omegaNmoinsun}\omega_{N-p}(z) =z - \sigma^2 g_{\mu_{sc} \boxplus \mu_{A_{N-p}}}(z)=z-\sigma^2 \tilde{g}_{N-p}(z).\end{equation}
Note that, for any $z\in \C \setminus \R$, $|\Im \omega_{N-p}(z)|\geq |\Im z |>0.$
Moreover, according to \eqref{correspondance}, the following one to one correspondance  holds:
$$\mathbb{R}\setminus{\rm supp} (\mu_{sc} \boxplus \mu_{A_{N-p}}) \begin{array}{cc}\stackrel{{\omega_{N-p}}}{\longrightarrow} \\
 \stackrel{\longleftarrow}{{H_{N-p}}}
 \end{array}  ~\{ u \in \mathbb{R} \setminus {\rm supp~} (\mu_{A_{N-p}}), \int \frac{1}{(u-x)^2}d\mu_{A_{N-p}}(x) <\frac{1}{\sigma^2}\}$$
and  for any $x\in \C\setminus{\rm supp} (\mu_{sc} \boxplus \mu_{A_{N-p}})$, $H_{N-p}(\omega_{N-p}(x))=x$.
Hence $\rho_N=H_{N-p}(\theta)=\theta+ \sigma^2 g_{\mu_{A_{N-p}}}(\theta)$ is the single pole of $\frac{1}{z-  \sigma^2 \tilde{g}_{N-p}(z) -\theta}$ in $\C$.
Therefore, \eqref{convrho} and 
 Proposition \ref{prop-calculHS} (used with $\phi(z)= \frac{1}{\omega_{N-p}(z)-\theta)}$) imply that 
\begin{equation} \label{tauNbis}
 - \frac{1}{\pi}  \int_{\mathbb C} \bar{\partial} F_k(h)(z) \ \frac{1}{z- \sigma^2 \tilde{g}_{N-p}(z)-\theta} d^2z=\frac{1}{\omega^{'}_{N-p}(\rho_N)}=H_{N-p}^{'}(\theta)= \tau_N(\theta). \end{equation}
\end{proof}
We now consider the process
\begin{equation}\label{defxiN}  \xi_N(z) = \left( P^* \sqrt{N}\left(G_{p}(z)-\Lambda_p(z) \right) P \right)_{11}.\end{equation} where $\Lambda_p$ is defined by \eqref{deflambdap}.

\subsection{Tightness of the sequence of  processes $\{\xi_N\}_N$} \label{sectiontight}
\begin{proposition}\label{tensiondexiN}
$\xi_{N}$ is tight on  ${\cal H}( \mathbb C\setminus \R)$.
\end{proposition}
\begin{proof}

$\xi_N: z\mapsto  \left( P^* \sqrt{N}\left(G_{p}(z)-\Lambda_p(z) \right) P \right)_{11}$
 is analytic on $\mathbb{C}\setminus \mathbb{R}$. Let $K$ be a compact set  in $\mathbb{C}\setminus \R$.
According to Lemma \ref{Shirai},  there exists $\delta > 0$ such that $\overline{K_\delta}\subset \mathbb{C}\setminus \R$ and for any $r>0$,
$$\left\| \xi_N \right\|_K^r \leq (\pi \delta^2)^{-1} \int _{\overline{K_\delta}} \left| \xi_N(z)\right|^r m(dz).$$
Therefore 
\begin{eqnarray}
\mathbb{E}\left( \left\| \xi_N \right\|_K^r \right)&\leq& (\pi \delta^2)^{-1} \int _{\overline{K_\delta}} \mathbb{E} \left( \left| \xi_N(z)\right|^r \right) m(dz)\\ &\leq&  (\pi \delta^2)^{-1} \sup_{z\in \overline{K_\delta}}  \mathbb{E} \left( \left| \xi_N(z)\right|^r \right) m({\overline{K_\delta}}). \label{ineg1}
\end{eqnarray}

In order to prove the tightness of $\xi_N$, using  \eqref{ineg0} and \eqref{ineg1}, we  are going to show that, for any compact set $K\subset \mathbb{C}\setminus \R$, there exists a constant $C'>0$ such that for all large N,  \begin{equation}\label{maj}\sup_{z\in K} \mathbb{E}\left( \left| \xi_N(z)\right|^2\right)<C'.\end{equation}
We have for any $z_1$ and $z_2$ in $\mathbb{C}\setminus \R$,  
\begin{eqnarray*} 
\lefteqn{\mathbb{E}\left( \xi_N(z_1)\xi_N(z_2)\right) } \\
&=&N \sum_{i,j,u,v=1}^p P^*_{1i}P_{j1} P^*_{1u}P_{v1} \mathbb{E} \left( \left( G_{ij}(z_1)-(\Lambda_p(z_1))_{ij} \right)
\left( G_{uv}(z_2)-(\Lambda_p(z_2))_{uv}\right)\right),\\
\end{eqnarray*} and \begin{eqnarray*}
\lefteqn{\mathbb{E} \left( \left( G_{ij}(z_1)-(\Lambda_p(z_1))_{ij} \right)
\left( G_{uv}(z_2)-(\Lambda_p(z_2))_{uv}\right)\right)}\\&=& \mathbb{E} \left( \left( G_{ij}(z_1)- \mathbb{E} \left( G_{ij}(z_1)\right) \right)
 \left( G_{uv}(z_2)- \mathbb{E} \left( G_{uv}(z_2)\right) \right)\right)\\&&
+  \left[  \mathbb{E} \left( G_{ij}(z_1)\right) -(\Lambda_p(z_1))_{ij}\right]  \left[  \mathbb{E} \left( G_{uv}(z_2)\right) -(\Lambda_p(z_2))_{uv}\right]. 
\end{eqnarray*}
According to Lemma \ref{estimGij} and the block diagonal structure of $A_N$, we have for any $l=1,2$,
\begin{equation}\label{GLambda}\mathbb{E} \left( G_{p}(z_l)\right) =\left((z_l-\sigma^2\tilde g_{N}(z_l))I_p -A_p\right)^{-1} +O(1/\sqrt{N}),\end{equation}
where $\tilde g_{N}$ is the Stieltjes transform of $\mu_{A_N}\boxplus \mu_{sc}$. 
First set 
$$\check G_{N-p} (w)=\left( z I_{N-p}  -\frac{ W_{N-p}}{\sqrt{N-p}}-  A_{N-p} \right)^{-1}.$$
By \eqref{majres}, we have \begin{equation}\label{chapeau}
\Vert \check G_{N-p}(z) \Vert \leq \vert \Im z \vert^{-1}.\end{equation}
Note that, 
\begin{eqnarray}\hat G_{N-p}(z)&=&\check G_{N-p}(z)\nonumber \\ &&+\frac{p}{\sqrt{N-p}(\sqrt{N} +\sqrt{N-p})}\nonumber\\&& ~~~~~~~\times\left( I_{N-p} - \hat G_{N-p}(z)  \left(z I_{N-p} - A_{N-p} \right)\right)
\check G_{N-p}(z). \label{comparaisonhat}\end{eqnarray}
Now, Lemma \ref{estimgmoinstildeg} yields that
$$\tilde g_{N}(z)= \mathbb{E} \left( \tr_N G(z)\right)+ O(1/N)$$ and $$\tilde g_{N-p}(z)= \mathbb{E} \left( \tr_{N-p}  \check G_{N-p}(z)\right)+ O(1/N),$$
and then, using \eqref{comparaisonhat}, that \begin{equation} \label{secondtermtr} \tilde g_{N-p}(z)= \mathbb{E} \left( \tr_{N-p} \hat  G_{N-p}(z)\right)+ O(1/N).\end{equation}   Since by (A.1.12) in \cite{BS10}, we have $\mathbb{E} \left( \tr_N G(z)\right)=\mathbb{E} \left( \tr_{N-p} \hat G_{N-p}(z)\right)+ O(1/N)$, we can deduce that $\tilde g_N(z) = \tilde g_{N-p}(z)+O(1/N),$ and thus that, for any $1\leq i,j\leq p$, $$\mathbb{E} \left( G_{ij}(z_l)\right) =(\Lambda_p(z_l))_{ij} +O(1/\sqrt{N}).$$
 Moreover,  by Lemma \ref{var}, $${ \rm Var}(G_{ij}(z_l))=O(1/N).$$
It readily follows that there exist polynomials $P_1$ and $P_2$ with nonnegative coefficients such that 
$$\mathbb{E}\left(  \xi_N(z_1)\xi_N(z_2) \right) \leq P_1\left( \left| \Im z_1 \right|^{-1}\right)P_2\left( \left| \Im z_2 \right|^{-1}\right)$$
and then there exists some polynomial $P_3$ with nonnegative coefficients such that for all large N and all $z\in \mathbb{C}\setminus \R$,
\begin{equation} \label{xi}
\mathbb{E}\left( \left| \xi_N(z)\right|^2 \right) \leq P_3\left( \left| \Im z \right|^{-1}\right).\end{equation}
This implies \eqref{maj}. Therefore, for any compact subset $K$ in $\C \setminus \R$,  
there exists a constant $C>0$ such that for all large N, $\mathbb{E}\left( \left\| \xi_N(z)\right\|_K^2\right)<C$
and the tightness of $\xi_{N}$ in  $\mathcal{H}(\C \setminus \R)$  follows from 
\eqref{ineg0} and Proposition \ref{criterion}.

\end{proof}
\subsection{Finite dimensional distributions of $\xi_N$}
Set $$\nabla_N (z) =\sqrt{N}\left(G_{p}(z)-\Lambda_p(z) \right).$$
We use Proposition \ref{Schur} for the inversion of 
$$zI_N -M_N = \begin{pmatrix} zI_p -\frac{1}{\sqrt{N}} W_p - A_p& - \frac{1}{\sqrt{N}}Y^* \\  -\frac{1}{\sqrt{N}}Y & zI_{N-p} -  M_{N-p} \end{pmatrix} $$
where $M_{N-p}$ is the lower right submatrix of size $N-p$ of $M_N$, 
 leading to:
\begin{equation} \label{Gp}
G_p(z) = (zI_p - \frac{1}{\sqrt{N}} W_p - A_p -  \frac{1}{N}Y^* \hat G_{N-p}(z) Y)^{-1} \end{equation}
where $\hat G_{N-p}$ is the resolvent of $M_{N-p}$.
Thus, 
\begin{equation} \label{schurxi} \nabla_N(z) = G_p(z) (W_p + \sqrt{N}( \frac{1}{N}Y^* \hat G_{N-p}(z) Y - \sigma^2 \tilde{g}_{N-p}(z)I_p)) \Lambda_p(z).\end{equation}

\begin{lemma}\label{comparaison}
Define for   $z \in \mathbb C \backslash \mathbb R$, 
  \begin{equation} \label{tildexiN}\tilde \nabla_N(z)= (zI_p - A_p - \sigma^2 g(z)I_p)^{-1}(W_p +Q_N(z))(zI_p - A_p - \sigma^2 g(z)I_p)^{-1},\end{equation}
where $Q_N(z)$ is   the following matrix of size $p$ : \begin{equation} \label{QNmatrix}
 Q_N(z) =  \frac{1}{\sqrt{N}}( Y^* \hat G_{N-p}(z) Y - \sigma^2 \Tr_{N-p}(\hat G_{N-p}(z)) I_p), z\in  \C \setminus \R
 \end{equation}
For any $z \in \C \setminus \R$, 
$$\nabla_N(z)-\tilde \nabla_N(z)  \vers^{\mathbb P}_{N \rightarrow \infty} 0. $$
\end{lemma}
\begin{proof}

Obviously, we have 
\begin{equation}\label{defLambda}  \Lambda_p(z) \vers_{N \rightarrow \infty} (zI_p - A_p - \sigma^2 g(z)I_p)^{-1},\end{equation}
and
\eqref{GLambda} and  Lemma \ref{var} yield  that
\begin{equation}\label{cvGp}
 G_p(z)  \vers^{\mathbb P}_{N \rightarrow \infty} (zI_p - A_p - \sigma^2 g(z)I_p)^{-1}.
 \end{equation} 
We write\\

\noindent $\sqrt{N}( \frac{1}{N}Y^* \hat G_{N-p}(z) Y - \sigma^2 \tilde{g}_{N-p}(z)I_p)$ \begin{eqnarray}&=&
 \frac{1}{\sqrt{N}}( Y^* \hat  G_{N-p}(z) Y - \sigma^2 \Tr_{N-p}(\hat G_{N-p}(z)) I_p)\nonumber\\&& + \sqrt{N}  \sigma^2 (\tr_{N-p}(\hat G_{N-p}(z)) I_p - \tilde{g}_{N-p}(z)I_p).\label{decomposition}\end{eqnarray}
\eqref{secondtermtr} and  Lemma \ref{vartrace} yield  that \begin{equation}\label{secondterm} \sqrt{N}  \sigma^2 (\tr_{N-p}(\hat G_{N-p}(z)) I_p - \tilde{g}_{N-p}(z)I_p)  \vers^{\mathbb P}_{N \rightarrow \infty}0. \end{equation}
Lemma \ref{comparaison} readily follows from \eqref{schurxi}, \eqref{decomposition}, \eqref{defLambda}, \eqref{cvGp}, \eqref{secondterm} and the tightness of $Q_N(z)$ (which readily follows from Lemma \ref{lemFQ}), by using Slutsky's theorem and  classical operations on convergence in probability.
\end{proof} 
We now state an approximation result  in distribution for the finite dimensional distributions of the process $\{Q_N(z), z \in \mathbb C \backslash \mathbb R \}$. 
\begin{lemma} \label{finitedimN}
 Let $(z_1, \ldots, z_q)$ be in $(\mathbb{C}\setminus \mathbb{R})^q$.
 Set $V_N = (Q_N(z_1), \ldots, Q_N(z_q))$.
Then, under the assumptions of Theorem \ref{casnondiag}
$$ d_{LP}(V_N, ({\cal G}_N(z_1), \ldots,{\cal G}_N(z_q))  ) \rightarrow 0$$
where ${\cal G}_N$ is a 
a centered matrix valued Gaussian process   whose distribution is given as follows : \\
 1) the processes $(({\mathcal G}_N)_{ij}(z))_z$ for $1 \leq i \leq j \leq p$ are independent, and $({\mathcal G}_N)_{ji}(z) = \overline{({\mathcal G}_N)_{ij}(\bar{z})}$. \\
 2) For $i \leq p$,
 \begin{eqnarray}
 \lefteqn{\mathbb E(({\cal G}_N)_{ii}(z_k) ({\cal G}_N)_{ii}(z_l))}\\ &= &\frac{(m_4-3 \sigma^4)}{2(N-p)}  \sum_{i=1}^{N-p} ((z_k-\sigma^2 g(z_k) -A_{N-p})^{-1})_{ii} ((z_l-\sigma^2 g(z_l) -A_{N-p})^{-1})_{ii}  \nonumber\\
&&  \qquad \qquad  +
\sigma^4 \int \frac{1}{ (z_k-x)(z_l-x)} d\lambda(x) \label{covarianceN}
\end{eqnarray}
2) For $1 \leq i \not= j \leq p$,
$$ \mathbb E(({\cal G}_N)_{ij}(z_k) ({\cal G}_N)_{ij}(z_l)) = 0$$
and $$
\mathbb E(({\cal G}_N)_{ij}(z_k) \overline{({\cal G}_N)_{ij}(z_l)} ) = \sigma^4  \int \frac{1}{ (z_k-x)(\bar{z}_l-x)} d\lambda(x). $$
\end{lemma}
\begin{proof}
We apply  Proposition  \ref{theoTCLquadratique} to the matrices $B(z) = \hat G_{N-p}(z)$, $z \in \mathbb C \setminus \mathbb{R}$. Note that these  matrices are random but they are independent of the $N\times p$ matrix  $Y$. \\
In order to conclude, 
we need to show that 
\begin{equation} \label{I_N}
 I_N :=  \frac{1}{N-p}\sum_{i=1}^{N-p} \mathbb ((\hat G_{N-p}(z_1))_{ii}(\hat G_{N-p}(z_2))_{ii}) = f_N(z_1, z_2) + o(1) \end{equation}
 with
 $$ f_N(z_1, z_2)  =  \frac{1}{N-p} \sum_{i=1}^{N-p} ((z_k-\sigma^2 g(z_k) -A_{N-p})^{-1})_{ii} ((z_l-\sigma^2 g(z_l) -A_{N-p})^{-1})_{ii} $$
and 
$$ J_N :=\tr_{N-p} (\hat G_{N-p}(z_1)\hat G_{N-p}(z_2) ) \vers^{\mathbb P}_{N\rightarrow \infty}  \int \frac{1}{ (z_1-x)(z_2-x)} d\lambda(x).$$
The second convergence follows from the convergence of $\mu_{{M}_{N-p}}$ towards $\lambda$.\\
For the first one, 
Lemma \ref{estimGij} and \eqref{comparaisonhat} yield that, for $k\leq N-p$,
\begin{eqnarray*}\E\left( (\hat G_{N-p}(z))_{kk}\right)&=&\left[ \left( (z-\sigma^2 \tilde g_{N-p}(z))I_N-A_{N-p}\right)^{-1}\right]_{kk}+ O\left(\frac{1}{\sqrt{N}}\right)\\
&=&\left[\left( (z-\sigma^2  g(z))I_N-A_{N-p}\right)^{-1}\right]_{kk}+o^{(u)}(1)\end{eqnarray*}

Thus, using Lemma \ref{var}, we can deduce that 
\begin{eqnarray}\lefteqn{ \mathbb E((\hat G_{N-p}(z_1))_{kk}(\hat G_{N-p}(z_2))_{kk})}\nonumber\\& =& \left[(z_1-\sigma^2 g(z_1) - A_{N-p})^{-1}\right]_{kk}\left[(z_2-\sigma^2 g(z_2) -  A_{N-p})^{-1}\right]_{kk} \nonumber\\&&+o^{(u)}(1) \label{IN}\end{eqnarray}
 From Lemma \ref{Herbst} in the Appendix, using that $f_N(W)= \frac{1}{N-p} \sum_{i=1}^{N-p} [(z_1-W-A_{N-p})^{-1}]_{ii}[(z_2-W-A_{N-p})^{-1}]_{ii}$
is Lipschitz with constant  $|\Im(z_1)|^{-2}|\Im(z_2)|^{-1} + |\Im(z_1)|^{-1}|\Im(z_2)|^{-2}$, we can deduce that   
 $$  \frac{1}{N-p} \sum_{i=1}^{N-p} (\hat G_{N-p}(z_1))_{ii}(\hat G_{N-p}(z_2) )_{ii}  = \frac{1}{N-p} \sum_{i=1}^{N-p}\mathbb E\left( (\hat G_{N-p}(z_1))_{ii}(\hat G_{N-p}(z_2) )_{ii} \right) + o_{\mathbb P}(1).$$
 We also use to obtain \eqref{covarianceN} from Proposition \ref{theoTCLquadratique} that for $\sigma^2 = 1$,
$$\mathbb E(|y_{11}|^4) - 2 = \frac{1}{2} (m_4 -3),$$
where we recall that $\mu$ is the distribution of $\sqrt{2}\Re{y_{11}}$ and $\sqrt{2}\Im{y_{11}}$. 
\end{proof}

\begin{corollary}\label{finiteTN}
 Under the assumptions of Theorem \ref{casnondiag}, for any $(z_1, \ldots, z_q)$ be in $(\mathbb{C}\setminus \mathbb{R})^q$, 
$$ d_{LP}( (\xi_N(z_1), \ldots,\xi_N(z_q)), ({\cal T}_N(z_1), \ldots,{\cal T}_N(z_q))  ) \rightarrow 0$$ where $\xi_N$ is the process defined by \eqref{defxiN} and   \begin{equation} \label{defiTN}{\cal T}_N(z)=(z-\theta - \sigma^2 g(z))^{-2}\left(P^*(W_p+\mathcal{G}_N(z))P\right)_{11}\end{equation}  $\mathcal{G}_N$ being a  centered Gaussian matrix valued process independent from $W_p$  whose distribution is described in Lemma \ref{finitedimN}.
\end{corollary}

\subsection{Fluctuations of the eigenvector}\label{fluctvect}
Recall from Proposition \ref{backtoresolvent} that, for any $k\in \mathbb{N^*}$,  $\Phi_N$ defined as $$\sqrt{N} (|\langle u_{i_0}, v_{i_0}\rangle|^2 -  \tau_N(\theta))$$ has the representation 
\begin{equation} \label{defPhi_N}
\Phi_N = - \frac{1}{\pi}  \int_{\mathbb C} \bar{\partial} F_k(h)(z) \xi_N(z) d^2z.
\end{equation}
We follow  the proof of  Lemma 6.3 in \cite{NY} based upon  the following estimates, from  \eqref{xi} :
\begin{equation}\label{xiNmajunif} \sup_N\mathbb E(|\xi_N(z)|) \leq P_3(|\Im(z)|^{-1}),\end{equation}
and from Lemma  \ref{finitedimN}  and \eqref{majts}, ($\mathcal{T}_N$ being  defined by \eqref{defiTN})
\begin{equation}\label{TNmaj} \sup_N\mathbb E(|\mathcal{T}_N(z)|) \leq P_5(|\Im(z)|^{-1}),\end{equation}
where  
 $P_3$, $P_4$ and $P_5$ are  some polynomial with nonnegative coefficients.  Hence in the following, in \eqref{defPhi_N}, we choose $k$ greater than  the degrees of $P_3$, $P_4$ and $P_5$.
\begin{proposition}\label{propfinale2}  Under the assumptions of Theorem \ref{casnondiag},
$ d_{LP}(\Phi_N,\tilde \Phi_N ) \rightarrow 0$  where $\tilde \Phi_N$ is given by 
$$\tilde \Phi_N =   (P^*(c_{\theta, \nu} W_{p}  +Z_{p,N}) P)_{11},$$where
$W_p$ is a Wigner matrix of size $p$, $Z_{p,N}$ is a centered Gaussian Hermitian matrix  of size $p$ with independent entries (modulo the symmetry condition); the diagonal coefficients are iid with variance  
 \begin{equation}
 \frac{1}{2}(m_4 - 3 \sigma^4) A_{\theta, \nu, N} + \sigma^4 B_{\theta, \nu}
 \end{equation}
and the off diagonal elements are i.i.d. complex Gaussian with distribution $Z$ such that $\mathbb E(Z^2) = 0$ and $\mathbb E(|Z|^2) =  \sigma^4 B_{\theta, \nu}$. \\
See Eq.\eqref{cov-diag2} for the definitions of $c_{\theta, \sigma}$, $B_{\theta, \nu}$ and \eqref{defAthetaN} for the definition of $A_{\theta, \nu, N}$.
\end{proposition}

\begin{proof} For the reader's convenience, we repeat here the strategy of  Lemma 6.3 in \cite{NY}.
For any  $0<\epsilon<1$  small enough  such that $\chi \equiv 1$ on $]-\epsilon; \epsilon[$,  define  
$$D_\epsilon= \{ z\in \C,  \vert \Im(z)\vert  \geq  \epsilon\}.$$
Set $$U_N=\int_{\mathbb C} \bar{\partial} F_k(h)(z)\xi_N(z)  d^2z,\;
U_N^\epsilon=\int_{D_\epsilon} \bar{\partial} F_k(h)(z)\xi_N(z)  d^2z$$ and
$$V_N =\int_{\mathbb C} \bar{\partial} F_k(h)(z){\cal T}_N(z) d^2z,\;V^\epsilon_N=\int_{D_\epsilon} \bar{\partial} F_k(h)(z){\cal T}_N(z) d^2z,$$
 where $\mathcal{T}_N$ is defined by \eqref{defiTN}. Let $f$ be a bounded continuous complex function on $\C$.\\
We have 
\begin{eqnarray*} \left|\mathbb{E}\left( f(U_N)\right)- \mathbb{E}\left( f(V_N)\right)\right|&\leq& 
\left|\mathbb{E}\left( f(U_N)\right)- \mathbb{E}\left( f(U_N^\epsilon)\right)\right|\\&&+\left|\mathbb{E}\left( f(U_N^\epsilon)\right)- \mathbb{E}\left( f(V_N^\epsilon)\right)\right|\\&&+\left|\mathbb{E}\left( f(V_N^\epsilon)\right)- \mathbb{E}\left( f(V_N)\right)\right|
\end{eqnarray*}
 Let $\delta>0$.\\

\noindent i) For any $\eta>0$ and $K>0$, we have  \begin{eqnarray} \lefteqn{\left|\mathbb{E}\left( f(U_N)\right)- \mathbb{E}\left( f(U_N^\epsilon)\right)\right|}\nonumber\\
&\leq & \left|\mathbb{E}\left( f(U_N)- f(U_N^\epsilon)\right) \1_{\vert U_N-U_N^\epsilon\vert > \eta}\right|\label{I}\\&&+
\left|\mathbb{E}\left( f(U_N)- f(U_N^\epsilon)\right) \1_{\vert U_N-U_N^\epsilon\vert \leq \eta, \vert U_N\vert \vee \vert U_N^\epsilon\vert >K}\right|\label{II}\\&&+\left|\mathbb{E}\left( f(U_N)- f(U_N^\epsilon)\right) \1_{\vert U_N-U_N^\epsilon\vert \leq \eta, \vert U_N\vert \vee \vert U_N^\epsilon\vert \leq K}\right|\label{III}\end{eqnarray}

In the following, the  constant $C>0$ may vary from line to line.
By \eqref{real-axis}, for any $z=x+iy $ in a neighborhood of the real axis,
\begin{equation}\label{majepsilon}|\bar{\partial} F_k(h)(z)| \leq C |y|^k.\end{equation} 
\eqref{barpartial}, \eqref{xiNmajunif} and  \eqref{majepsilon}  readily yield that  for any $\epsilon >0$,
 for any $N$,  $$\mathbb{E}\left( \left|U_N\right| \right)\vee\mathbb{E}\left(  \left|U_N^\epsilon\right|\right) \leq \int_{\C} | \bar{\partial} F_k(h)(z)| \mathbb E|\xi_N(z)|  d^2z \leq  C,$$ and therefore 
\begin{eqnarray*}\mathbb{P}\left( \left|U_N\right| \vee \left|U_N^\epsilon\right|>K\right)&\leq& \mathbb{P}\left( \left|U_N\right|>K\right)+ \mathbb{P}\left( \left|U_N^\epsilon\right|>K\right)\\&\leq& \frac{2}{K} \mathbb{E}\left( \left|U_N\right| \right)\vee\mathbb{E}\left(  \left|U_N^\epsilon\right|\right)\leq \frac{C}{K}.
\end{eqnarray*}
 Thus, we can choose $K$ such that, for any $\epsilon>0$, any $\eta>0$ and any $N$,   the RHS in \eqref{II} is smaller than $\delta$. Now, since $f$ is uniformly continuous on $\{z \in \C\setminus \R,
\vert z \vert \leq K\}$,  one can choose $\eta$ small enough
such that, for any $\epsilon>0$ and any $N$,   \eqref{III} 
are smaller that $\delta$.
Finally, by using  \eqref{xiNmajunif} and  \eqref{majepsilon}, 
  for any $\epsilon$ small enough, for any $N$, 
\begin{equation} \label{surleqepsilon}\mathbb{E} \left|U_N-U_N^{\epsilon}\right|= \mathbb{E} \left|\int_{\{z, |\Im(z)|< \epsilon\}} \bar{\partial} F_k(h)(z)\xi_N(z)  d^2z\right|\leq C\epsilon,\end{equation} and then $$\mathbb{P}\left( \left|U_N-U_N^\epsilon\right|>\eta\right)\leq \frac{C}{\eta} \epsilon.$$
The term $|\mathbb{E}\left( f(V_N^\epsilon)\right)- \mathbb{E}\left( f(V_N)\right)|$ is treated in the same way, using \eqref{TNmaj}.

\noindent ii)
Note that  since h and $\chi$ are compactly supported,  $\int_{ D_\epsilon} \bar{\partial} F_k(h)(z)\xi_N(z) d^2z$ may be seen as  an integral on a fixed compact set $K_\epsilon\subset D_\epsilon$. Proposition \ref{tensiondexiN} readily yields that $\xi_{N}$ is tight on the set ${\cal C}( K_\epsilon)$ of complex continuous functions on $K_\epsilon$.
Thus, using Corollary \ref{finiteTN} ,  since 
$f \mapsto \int_{ D_\epsilon} \bar{\partial} F_k(h)(z)f(z)  d^2z$ is continuous on $\mathcal{C}(K_\epsilon)$, it remains to  prove the tightness of the  process $\{{\cal T}_N (.) \}$ on any compact set $K$ in $\{z, \vert \Im z \vert  \geq \epsilon\}$ to deduce from  Lemma 5.7 in \cite{NY} that :

$$|\mathbb{E}\left( f(U_N^\epsilon)\right)- \mathbb{E}\left( f(V_N^\epsilon)\right)| \rightarrow_{N\rightarrow +\infty} 0.$$
\\
We postpone the proof of the tightness of the  process $\{{\cal T}_N (.) \}$ (see Lemma \ref{tightTN} below)
Thus, up to the proof of Lemma \ref{tightTN}, the convergence to 0 of $d_{LP}(\Phi_n, - \frac{1}{\pi} \int_{\mathbb C} \bar{\partial} F_k(h)(z){\cal T}_N(z) d^2z )$   follows.

\noindent
It remains to compute $ c_{\theta, \nu},  B_{\theta, \nu} ,  A_{\theta,\nu, N}$. \\
The computation of $c_{\theta, \nu}$ follows from Proposition \ref{prop-calculHS} :
$$ c_{\theta, \nu} =  -\frac{1}{\pi}  \int_{\mathbb C} \bar{\partial} F_k(h)(z) \frac{1}{(z-\sigma^2 g(z) - \theta)^2} dz = {\rm Res}(  \frac{1}{(z-\sigma^2 g(z) - \theta)^2}, \rho_\theta).$$
A straightforward computation gives 
$$ {\rm Res}(  \frac{1}{(z-\sigma^2 g(z) - \theta)^2}, \rho_\theta) = - \frac{\omega''(\rho_\theta)}{(\omega'(\rho_\theta))^3} = H''(\theta)$$
where $\omega(z) = z - \sigma^2g(z)$ ($g := g_{\lambda}$) and $H(z) = z + \sigma^2 g_\nu(z)$. \\
From  Lemma \ref{finitedimN} and using Fubini theorem, the diagonal entries of $Z_N$ have variance equal to 
$$  \frac{1}{2} (m_4 - 3 \sigma^4) A_{\theta, \nu, N}  + \sigma^4 B_{\theta, \nu}$$ and the off diagonal entries of $Z_N$ have variance equal to
$\sigma^4 B_{\theta, \nu}$
where 
\begin{eqnarray*}
A_{\theta, \nu,N} &=&\frac{1}{N-p} \sum_{i=1}^{N- p} \left(  \frac{1}{\pi}  \int_{\mathbb C} \bar{\partial} F_k(h)(z) \frac{[(z-\sigma^2 g(z) -A_{N-p})^{-1}]_{ii}}{(z-\sigma^2 g(z) - \theta)^2} d^2z\right)^2 d\nu(x) \\
B_{\theta, \nu} &=& \int_\mathbb R \left(  \frac{1}{\pi}  \int_{\mathbb C} \bar{\partial} F_k(h)(z) \frac{1}{(z-\sigma^2 g(z) - \theta)^2(z -x)} d^2z\right)^2 d\lambda(x).
\end{eqnarray*}
The functions $\phi_i(z) =   \frac{[(z-\sigma^2 g(z) -A_{N-p})^{-1}]_{ii}}{(z-\sigma^2 g(z) - \theta)^2}$  for $i \leq N-p$ and $\phi_x(z) =  \frac{1}{(z-\sigma^2 g(z) - \theta)^2(z -x)}$ for $x \in {\rm supp}(\lambda)$ satisfy the hypothesis of Proposition \ref{prop-calculHS}.
Straightforward computations lead to : 
\begin{eqnarray*}
A_{\theta, \nu,N} &= &\frac{1}{N-p} \sum_{i=1}^{N- p}  ({\rm Res}(\frac{[(z-\sigma^2 g(z) -A_{N-p})^{-1}]_{ii}}{(z-\sigma^2 g(z) - \theta)^2}, \rho_\theta))^2 \\
&=&  \frac{1}{N-p} \sum_{i=1}^{N-p} \left( H''(\theta) [(\theta I_{N-p} - A_{N-p})^{-1}]_{ii} - H'(\theta) [(\theta I_{N-p} - A_{N-p})^{-2}]_{ii} \right)^2 \\
&=&  \frac{1}{N-p} \sum_{i=1}^{N-p} \left( \sigma^2 g''_\nu(\theta) [(\theta I_{N-p} - A_{N-p})^{-1}]_{ii} - (1+ \sigma^2 g'_\nu(\theta)) [(\theta I_{N-p} - A_{N-p})^{-2}]_{ii} \right)^2
\end{eqnarray*}
and
\begin{eqnarray*}
B_{\theta, \nu} &= & \int_\mathbb R ({\rm Res}(\frac{1}{(z-\sigma^2 g(z) - \theta)^2(z -x)}, \rho_\theta))^2 d\lambda(x) \\
&=&  -\frac{1}{6}{g_\nu'''(\theta)} -\frac{\sigma^2}{2}({g_\nu''(\theta)})^2 \frac{1+2\sigma^2 g'_\nu(\theta)}{1+\sigma^2 g'_\nu(\theta)}.
\end{eqnarray*}

\end{proof}
We now prove the following Lemma, used in the proof of the above Proposition.
 \begin{lemma} 
 \label{tightTN} Let $K$ be a compact subset in $\{z, \vert \Im z \vert  \geq \epsilon\}$, for some $\epsilon >0$.
 The  process $\{{\cal T}_N (.) \}$ defined in \eqref{defiTN} is a tight sequence on $K$, more precisely, 
 \begin{equation} \label{Kol}
 \sup_{z_1, z_2 \in K, n \in \mathbb N} \frac{ \mathbb E(|{\cal T}_N(z_1) - {\cal T}_N(z_2)|^2)}{ |z_1 -z_2|^2} < \infty.
 \end{equation}
 \end{lemma}
 \begin{proof} From Lemma \ref{finitedimN}, 
 \begin{eqnarray*}
\lefteqn{  \mathbb E(|({\cal G}_N)_{ij}(z_1) - ({\cal G}_N)_{ij}(z_2)|^2)}\\& = & \delta_{ij}\frac{1}{2} (m_4-3 \sigma^4) \frac{1}{N-p} \sum_{i=1}^{N-p}(|((z_1-\sigma^2 g(z_1) -A_{N-p})^{-1})_{ii} -((z_2-\sigma^2 g(z_2) -A_{N-p})^{-1})_{ii}|^2) \\
  && \quad+\sigma^4 \int |\frac{1}{z_1-x} - \frac{1}{z_2-x} |^2 d\lambda(x).
\end{eqnarray*}

From the resolvent identity,
\begin{eqnarray*} \lefteqn{(z_1-\sigma^2 g(z_1) -A_{N-p})^{-1} -(z_2-\sigma^2 g(z_2) -A_{N-p})^{-1}}\\& = (z_2 - z_1-\sigma^2 (g(z_2)-g(z_1)) ) (z_1-\sigma^2 g(z_1) -A_{N-p})^{-1} (z_2-\sigma^2 g(z_2) -A_{N-p})^{-1},\end{eqnarray*}
thus,
\begin{eqnarray*}
|((z_1-\sigma^2 g(z_1) -A_{N-p})^{-1})_{ii} -((z_2-\sigma^2 g(z_2) -A_{N-p})^{-1})_{ii}|
&\leq &\frac{1}{ \varepsilon^2} (1+\frac{\sigma^2}{\epsilon^2}) |z_1-z_2|
\end{eqnarray*}
and thus,
$$  \frac{1}{N-p} \sum_{i=1}^{N-p} (|((z_1-\sigma^2 g(z_1) -A_{N-p})^{-1})_{ii} -((z_2-\sigma^2 g(z_2) -A_{N-p})^{-1})_{ii}|^2) \leq \frac{1}{ \varepsilon^4}   (1+\frac{\sigma^2}{\epsilon^2})^2|z_1-z_2|^2.$$
Since moreover
$$ \int |\frac{1}{z_1-x} - \frac{1}{z_2-x} |^2 d\lambda(x) \leq \vert \Im z_1\vert^2 \vert z_2 \vert^2 |z_1-z_2|^2 \leq \frac{1}{ \varepsilon^4}  |z_1-z_2|^2$$
\eqref{Kol} readily follows. The tightness follows from Kolmogorov's  criterion (see \cite{Bi}). \end{proof}

Proposition \ref{propfinale2}  and Proposition \ref{backtoresolvent} readily yield Theorem \ref{casnondiag}.


\subsection{ Proof of Theorem \ref{propcasnondiag2}}
Theorem \ref{propcasnondiag2} follows from Theorem  \ref{casnondiag} once we proved that $A_{\theta,\nu, N}$ converge to $A_{\theta, \nu}$.
\begin{lemma}
Assume that the matrix $A_N$ satisfies {\bf (A')} with $A_{N-p}$ diagonal. Then, the sequence  $(A_{\theta,\nu, N})_N$ defined by \eqref{defAthetaN} converges to  $A_{\theta,\nu}$ defined by \eqref{cov-diag2}.
\end{lemma}
\begin{proof}
Denote by $d_i$ the eigenvalues of $A_{N-p}$.
 \begin{eqnarray*}
 A_{\theta,\nu, N}&=&  \frac{1}{N-p} \sum_{i=1}^{N-p} \left( H''(\theta) (\theta-d_i)^{-1} - H'(\theta)  (\theta-d_i)^{-2} \right)^2 \\
 & \longrightarrow_{N \rightarrow \infty}&  (H''(\theta))^2 \int \frac{1}{(\theta-x)^2} d\nu(x) - 2 H'(\theta) H''(\theta) \int \frac{1}{(\theta-x)^3} d\nu(x) \\
 && \qquad + (H'(\theta))^2  \int \frac{1}{(\theta-x)^4} d\nu(x) \\
 &=& -(H''(\theta))^2 g'_{\nu}(\theta) - H'(\theta) H''(\theta) g''_{\nu}(\theta) - \frac{1}{6} (H'(\theta))^2 g'''_{\nu}(\theta)
 \end{eqnarray*}
 Using, $H'(\theta) = 1 + \sigma^2 g'_{\nu}(\theta)$ and $H''(\theta) = \sigma^2 g''_{\nu}(\theta)$, we obtain the formula for $A_{\theta, \nu}$ given in \eqref{cov-diag2}.

\end{proof}

\section{Appendix: Poincar\'e inequality and concentration phenomenon}
A probabilty $\mu$ satisfies a Poincar\'e inequality if for any
${\cal C}^1$ function $f: \mathbb{R}\rightarrow \mathbb{C}$  such that $f$ and
$f'$ are in $L^2(\mu)$,
$$\mathbf{V}(f)\leq C_{PI}\int  \vert f' \vert^2 d\mu ,$$
\noindent with $\mathbf{V}(f) = \int \vert
f-\int f d\mu \vert^2 d\mu$. \\
 If the law of a random variable $X$ satisfies the Poincar\'e inequality with constant $C_{PI}$ then, for any fixed $\alpha \neq 0$, the law of $\alpha X$ satisfies the Poincar\'e inequality with constant $\alpha^2 C_{PI}$.\\
Assume that  probability measures $\mu_1,\ldots,\mu_M$ on $\mathbb{R}$ satisfy the Poincar\'e inequality with constant $C_{PI}(1),\ldots,C_{PI}(M)$ respectively. Then the product measure $\mu_1\otimes \cdots \otimes \mu_M$ on $\mathbb{R}^M$ satisfies the Poincar\'e inequality with constant $\displaystyle{C_{PI}^*=\max_{i\in\{1,\ldots,M\}}C_{PI}(i)}$ in the sense that for any differentiable function $f$ such that $f$ and its gradient ${\rm grad} f$ are in $L^2(\mu_1\otimes \cdots \otimes \mu_M)$,
$$\mathbf{V}(f)\leq C_{PI}^* \int \Vert {\rm grad} f \Vert_2 ^2 d\mu_1\otimes \cdots \otimes \mu_M$$
\noindent with $\mathbf{V}(f) = \int \vert
f-\int f d\mu_1\otimes \cdots \otimes \mu_M \vert^2 d\mu_1\otimes \cdots \otimes \mu_M$.
\begin{lemma}\label{Herbst}{ Lemma 4.4.3 and Exercise 4.4.5 in \cite{AGZ} or Chapter 3 in  \cite{L}. }
Let $\mathbb{P}$ be a probability measure on $\mathbb{R^M}$ which satisfies a Poincar\'e inequality with constant $C_{PI}$. Then there exists $K_1>0$ and $K_2>0$ such that,  for any  Lipschitz function $F$  on $\mathbb{R}^M$ with Lipschitz constant $\vert F \vert_{Lip}$,
$$\forall \epsilon> 0,  \, \mathbb{P}\left( \vert F-\mathbb{E}_{\mathbb{P}}(F) \vert > \epsilon \right) \leq K_1 \exp\left(-\frac{\epsilon}{K_2 \sqrt{C_{PI}} \vert F \vert_{Lip}}\right).$$
\end{lemma}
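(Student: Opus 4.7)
The plan is to prove Lemma \ref{Herbst} by the classical Aida--Masuda--Shigekawa / Bobkov--Ledoux route, which derives a bound on the Laplace transform of $F-\mathbb{E}_{\mathbb{P}}(F)$ from iterating the Poincar\'e inequality applied to $e^{\lambda F/2}$, and then applies exponential Markov. Throughout, write $G := (F - \mathbb{E}_{\mathbb{P}}(F))/|F|_{Lip}$, so that $|G|_{Lip}=1$ and $\mathbb{E}_{\mathbb{P}}(G)=0$; by Rademacher's theorem $\|\mathrm{grad}\, G\|_2 \leq 1$ almost everywhere, and by a routine mollification (convolving with a smooth compactly supported kernel) one may assume $G$ is $\mathcal{C}^1$ so that the Poincar\'e inequality applies directly. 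Once the bound is proved for $G$, rescaling by $|F|_{Lip}$ recovers the claim for $F$.

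First I would apply the Poincar\'e inequality to $f = e^{\lambda G/2}$, for real $\lambda$. Since
$$\|\mathrm{grad}\, e^{\lambda G/2}\|_2^2 = \tfrac{\lambda^2}{4}\,\|\mathrm{grad}\, G\|_2^2\,e^{\lambda G} \leq \tfrac{\lambda^2}{4}\,e^{\lambda G},$$
and $\mathbf{V}(e^{\lambda G/2}) = Z(\lambda) - Z(\lambda/2)^2$ with $Z(\lambda) := \mathbb{E}_{\mathbb{P}}[e^{\lambda G}]$, the Poincar\'e inequality gives the key functional inequality
$$Z(\lambda) - Z(\lambda/2)^2 \leq \tfrac{C_{PI}\lambda^2}{4}\, Z(\lambda),$$
so that for $\lambda^2 < 4/C_{PI}$,
$$Z(\lambda) \leq \frac{Z(\lambda/2)^2}{1 - C_{PI}\lambda^2/4}.$$

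Next I would iterate: taking logarithms and repeating $n$ times,
$$\log Z(\lambda) \leq 2^n \log Z(\lambda/2^n) \;-\; \sum_{k=0}^{n-1} 2^k \log\!\left(1 - \tfrac{C_{PI}\lambda^2}{4^{k+1}}\right).$$
Since $Z(0)=1$ and $Z'(0) = \mathbb{E}_{\mathbb{P}}(G)=0$, a Taylor expansion yields $\log Z(\lambda/2^n) = O(\lambda^2/4^n)$, hence $2^n\log Z(\lambda/2^n)\to 0$ as $n\to\infty$. Moreover, for $|\lambda| \leq \lambda_0/\sqrt{C_{PI}}$ with any fixed $\lambda_0<2$, each term $-2^k \log(1 - C_{PI}\lambda^2/4^{k+1})$ is comparable to $2^k\cdot C_{PI}\lambda^2/4^{k+1} = C_{PI}\lambda^2\cdot 2^{-k-2}$, and the series converges absolutely to a bound independent of $n$. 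Passing to the limit, I obtain a universal constant $K>0$ with $Z(\lambda) \leq K$ for every $|\lambda| \leq 1/\sqrt{C_{PI}}$, say.

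Finally, I apply the exponential Markov inequality at $\lambda = 1/\sqrt{C_{PI}}$: for every $\epsilon>0$,
$$\mathbb{P}(G > \epsilon) \leq e^{-\lambda\epsilon}Z(\lambda) \leq K\,\exp\!\left(-\tfrac{\epsilon}{\sqrt{C_{PI}}}\right),$$
and the same argument applied to $-G$ yields the symmetric tail. Unwinding the normalization $G = (F-\mathbb{E}_{\mathbb{P}}(F))/|F|_{Lip}$ gives
$$\mathbb{P}\bigl(|F-\mathbb{E}_{\mathbb{P}}(F)|>\epsilon\bigr) \leq 2K \exp\!\left(-\tfrac{\epsilon}{\sqrt{C_{PI}}\,|F|_{Lip}}\right),$$
which is precisely the stated inequality with $K_1 = 2K$ and $K_2 = 1$. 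The main obstacle in this strategy is the control of the iteration: one must verify that the series $\sum_k 2^k \log(1 - C_{PI}\lambda^2/4^{k+1})$ stays bounded uniformly in $n$, which forces $\lambda$ to remain strictly below the Poincar\'e threshold $2/\sqrt{C_{PI}}$ and explains why the Poincar\'e inequality only yields sub-exponential (not Gaussian) concentration. The approximation of Lipschitz $F$ by smooth functions is a minor but necessary technical step to ensure the Poincar\'e inequality in its stated $\mathcal{C}^1$ form can be invoked on $e^{\lambda G/2}$.
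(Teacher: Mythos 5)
Your argument is correct and is exactly the standard Herbst-type iteration used in the sources the paper cites for this lemma (the paper itself gives no proof, quoting Lemma 4.4.3/Exercise 4.4.5 of \cite{AGZ} and Chapter 3 of \cite{L}): apply Poincar\'e to $e^{\lambda G/2}$, iterate $Z(\lambda)\leq Z(\lambda/2)^2(1-C_{PI}\lambda^2/4)^{-1}$, pass to the limit using $\mathbb{E}(G)=0$, and conclude by exponential Markov, yielding the bound with $K_2=1$. The only point left implicit is the a priori finiteness of $Z(\lambda)=\mathbb{E}_{\mathbb{P}}(e^{\lambda G})$, which is needed both to invoke the Poincar\'e inequality for $e^{\lambda G/2}\in L^2$ and to rearrange the key inequality; this is routinely handled by running the argument on bounded truncations of $G$ and passing to the limit by Fatou, so it is a standard technicality rather than a gap.
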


\section*{Acknowledgements} The authors want to thank an anonymous referee who  encouraged them to establish more general   results and to provide a better readability, which led to an overall improvement of the paper.




\begin{thebibliography}{99}
\bibitem{AGZ} G.W. Anderson, A. Guionnet and O. Zeitouni. \emph{An
introduction to random matrices}, Cambridge University Press, Cambridge,
2010.
\bibitem{BS} Z. D. Bai  and J. W. Silverstein.   No eigenvalues outside the support of the
limiting spectral distribution of large-dimensional sample covariance matrices.
{\em Ann. Probab.} 26, 316--345, 1998.
\bibitem{BS10} Z. Bai and J. Silverstein, {\it Spectral analysis of large dimensional random matrices} (Second edition. Springer Series in Statistics. Springer, New York, 2010).
\bibitem{BY} Z. Bai and J. Yao. Central limit theorem for eigenvalues in a spiked population model.
{\it Ann. Inst. H. Poincaré Probab. Statist.}
Volume 44, Number 3, 447--474, 2008.
\bibitem{BaiYin88}
Z.~D. Bai and Y.~Q. Yin.
\newblock Necessary and sufficient conditions for almost sure convergence of
  the largest eigenvalue of a {W}igner matrix.
\newblock {\em Ann. Probab.}, 16(4),  1729--1741, 1988.
\bibitem{BBP} J. Baik, G. Ben Arous and S. P\'ech\'e. Phase transition of the largest eigenvalue for nonnull complex sample covariance matrices. {\it Ann. Probab.} 33  no. 5, 1643--1697, 2005.
\bibitem{BelBer07} S. T. Belinschi and H. Bercovici, A new approach
to subordination results in free probability, \emph{J. Anal. Math.}
101,  357--365, 2007.
\bibitem{BC} S. Belinschi, M. Capitaine.
Spectral properties of polynomials in independent Wigner and deterministic matrices
{\it J. Funct. Anal.},  Volume 273, Issue 12,  3901--3963, 2017.
\bibitem{BG-K} F. Benaych-Georges and A. Knowles : Lectures on the local semicircle law for Wigner matrices. {\em Advanced Topics in Random Matrices}, Florent Benaych-Georges, Charles Bordenave,
Mireille Capitaine, Catherine Donati-Martin, Antti Knowles (edited by F. Benaych-Georges, D.
Chafai, S. P\'ech\'e, B. de Tili\`ere) Panoramas et synth\`eses 53, 2018.
\bibitem{Biane97b}
P.~Biane.
\newblock On the free convolution with a semi-circular distribution.
\newblock {\em Indiana Univ. Math. J.}, 46(3), 705--718, 1997.

\bibitem{Biane98}P. Biane, Processes with free increments, {\em Math.
Z}. 227, 143--174, 1998.
\bibitem{Bi} P. Billingsley. {\it Convergence of probability measures.} Second edition. Wiley Series in Probability and Statistics: Probability and Statistics. 1999.
\bibitem{BE} A. Bloemendal, L. Erd\"os, A. Knowles, H.-T. Yau, and J. Yin. Isotropic local laws for sample
covariance and generalized wigner matrices. {\em Electron. J. Probab}, 19(33):1--53, 2014.
\bibitem{C} M. Capitaine. Additive/multiplicative free subordination property and limiting eigenvectors of spiked additive deformations of 
Wigner matrices and spiked sample covariance matrices
{\em J.
Theor. Probab.}, Volume 26 (3), 595--648, 2013.
\bibitem{Csem} M. Capitaine. Limiting eigenvectors of outliers for Spiked Information-Plus-Noise type matrices
{\em  S\'eminaire de Probabilit\'es XLIX},  119-164, 2018).
\bibitem{CRMTA} M. Capitaine. Nonuniversality of fluctuations of outliers for Hermitian polynomials in a complex Wigner matrix and a spiked diagonal matrix.
{\em 
Random Matrices Theory  Appl.},
https://doi.org/10.1142/S2010326320500136,  2019.
\bibitem{CD} M. Capitaine, C. Donati-Martin. Spectrum of deformed random matrices and free probability. {\em Advanced Topics in Random Matrices}, Florent Benaych-Georges, Charles Bordenave,
Mireille Capitaine, Catherine Donati-Martin, Antti Knowles (edited by F. Benaych-Georges, D.
Chafai, S. P\'ech\'e, B. de Tili\`ere) Panoramas et synth\`eses 53, 2018.
\bibitem{CDF} M. Capitaine, C. Donati-Martin and D. F\'eral : The largest eigenvalues of finite rank deformation of large Wigner matrices: convergence and nonuniversality of the fluctuations. {\it Ann. Probab.}  37, no. 1, 1--47, 2009.
\bibitem{CDF1} M. Capitaine, C. Donati-Martin and D. F\'eral. Central limit theorems for eigenvalues of deformations of Wigner matrices. 
{\it Ann. Inst. H. Poincaré Probab. Statist.} 48, 1, 107-133, 2012. 
\bibitem{CDFF} M. Capitaine, C. Donati-Martin, D. F\'eral and M. F\'evrier. Free convolution with a semicircular distribution and eigenvalues of spiked deformations of Wigner matrices. {\it Electron. J. Probab.} 16 no. 64,   1750--1792, 2011.
\bibitem{CP} M. Capitaine, S. P\'ech\'e. Fluctuations at the edges of the spectrum of the full rank deformed G.U.E. {\it  Probab. Theory Relat. Fields},  Volume 165, Issue 1,  117--161, 2016. 
\bibitem{E} L.  Erd\"os, B. Schlein, B. and H.T Yau.  Local semicicle law and complete delocalization for Wigner
random matrices. {\it Comm. Math. Phys.}  287(2), 641--655, 2009.
\bibitem{E2} L. Erd\"os, B. Schlein and H.-T. Yau. Semicircle law on short scales and delocalization of eigenvectors for
Wigner random matrices. {\em Ann. Probab.}, 37(3):815--852, 2009.
\bibitem{FK}
Z.~F{\"u}redi and J.~Koml{\'o}s.
\newblock The eigenvalues of random symmetric matrices.
\newblock {\em Combinatorica}, 1(3),  233--241, 1981.

\bibitem{VDN} D. Voiculescu, K. J. Dykema, and A. Nica.  \emph{Free
random variables}, American Mathematical Society, Providence, RI,
1992.
\bibitem{KYev} A.  Knowles and J. Yin.  Eigenvector distribution of Wigner matrices. {\em Probab. Theory and
Relat. Fields.} 155(3-4),  543--582, 2013.
\bibitem{L} M. Ledoux. {\it The Concentration of Measure Phenomenon.} Mathematical Surveys and Monographs,
Volume 89, A.M.S, 2001.
\bibitem{LeeYin} J. O. Lee and J. Yin. A necessary and sufficient condition for edge universality of Wigner matrices.
{\em Duke Math. J.}
Volume 163, Number 1, 117--173, 2014.
\bibitem{Mt} \textsc{Mathias R.} (1993). {The {H}adamard operator norm of a
    circulant and applications}, {\it SIAM J. Matrix Anal. Appl. }{\bf 14},   1152--1167.
\bibitem{MS} J.A. Mingo and R. Speicher. {\em Free probability and Random matrices.} Fields Institute Monographs
Originally published by American Mathematical Society
ISSN: 1069-5273. Springer.
\bibitem{NY} J. Najim, J. Yao. Gaussian fluctuations for linear spectral statistics of large random covariance matrices. {\it The Annals of App. Prob.} { 26}, 1837--1887, 2016.
\bibitem{Pa}   L. A. Pastur. On the spectrum of random matrices.
{\it Theoretical and Mathematical Physics}, Volume 10, Issue 1,   67--74, 1972.
\bibitem{Peche}S. P\'ech\'e.
 \newblock{The largest eigenvalue of small rank perturbations of Hermitian random matrices.} 
 \newblock{ \em Probab. Theory Relat. Fields} 134, no. 1,  127-173, 2006.
\bibitem{S} T. Shirai. Limit theorems for random analytic functions and
their zeros. In Functions in
number theory and their probabilistic aspects, RIMS K\^oky\^uroku Bessatsu, B34,  335--359.
Res. Inst. Math. Sci. (RIMS), Kyoto, 2012.
\bibitem{Sos99} A.  Soshnikov. Universality at the edge of the spectrum in Wigner random
matrices. {\em  Comm. Math. Phys. }207, 697--733, 1999.
\bibitem{TV3} T. Tao and V.  Vu. Random matrices: universal properties of eigenvectors. {\em Random Matrices
Theory Appl.} 1(1), 1150,001, 27, 2012.
\bibitem{TW} 
 C. Tracy, H. Widom.  Level-spacing distributions and the Airy kernel. {\it  Comm. Math. Phys. 159}, 151--174, 1994. 
\bibitem{voic-fish1} D. Voiculescu.  The analogues of entropy and of Fisher's
information measure in free probability theory. I, \emph{Comm. Math.
Phys}. 155, 71--92, 1993.
\bibitem{Wigner55}
E.~P. Wigner.
\newblock Characteristic vectors of bordered matrices with infinite dimensions.
\newblock {\em Ann. of Math. (2)} 62, 548--564, 1955.
\bibitem{Wigner58}
E.~P. Wigner.
\newblock On the distribution of the roots of certain symmetric matrices.
\newblock {\em Ann. of Math.} (2) 67, 325--327, 1958.
\end{thebibliography}
\end{document}